\newcommand{\R}{\mathbb{R}}
\newcommand{\C}{\mathbb{C}}
\newcommand{\prop}[1]{\mathcal{P}(#1)}
\newcommand{\norm}[1]{\lVert#1\rVert}
\newcommand{\abs}[1]{|#1|}
\newcommand{\innerproduct}[2]{\langle #1, #2 \rangle}
\newcommand{\wedgeproduct}[2]{#1 \wedge #2}
\newtheorem{theorem}{Theorem}[section]
\newtheorem{definition}[theorem]{Definition}
\newtheorem{lemma}[theorem]{Lemma}
\newtheorem{proposition}[theorem]{Proposition}
\newtheorem{corollary}[theorem]{Corollary}
\newtheorem{remark}{Remark}[section]
\numberwithin{equation}{section}
\begin{document}
\title[Refined Kato type inequalities and new vanishing theorems]{Refined Kato type inequalities and new vanishing theorems on complete K\"ahler and quaternionic K\"ahler manifolds}

\author[Dat]{Dinh Tien Dat}
\author[Dung]{Nguyen Thac Dung$^*$}
\author[Luo]{Yong Luo$^*$$\footnote{*Corrresponding authors}$}
\address[Dinh Tien Dat]{Department of Mathematics - Mechanics - Informatics, Vietnam National University, University of Sciences, Hanoi, No. 334, Nguyen Trai Road, Thanh Xuan, Hanoi, Vietnam}
\email{dinhtiendat\_t66@hus.edu.vn}
\address[Nguyen Thac Dung]{Department of Mathematics - Mechanics - Informatics, Vietnam National University, University of Sciences, Hanoi, No. 334, Nguyen Trai Road, Thanh Xuan, Hanoi, Vietnam}
\email{dungmath@gmail.com or dungmath@vnu.edu.vn}
\address[Yong Luo]{Mathematical Science Research Center, Chongqing University of Technology,  No. 69, Hongguang Road, Chongqing, China}
\email{yongluo-math@cqut.edu.cn}

\begin{abstract}
Given a complete Riemannian manifold satisfying a weighted Poincar\'{e} inequality and having a bounded below Ricci curvature, various vanishing theorems for harmonic functions and harmonic 1-forms have been published. We generalized these results to $L^p$-integrable pluriharmonic functions and harmonic 1-forms  on complete K\"{a}hler and quaternionic K\"{a}hler manifolds respectively by utilizing the B\"ochner technique and several  refined Kato type inequalities. Moreover, we also prove  the vanishing property of pluriharmonic functions with finite $L^p$ energy on complete K\"{a}hler manifolds satisfying a Sobolev type inequality.
\end{abstract}
\maketitle
\bigskip

\section{Introduction}
Since the last decade of the 20th century, the weighted Poincar\'{e} inequality of numerous forms have been studied thoroughly due to many of its critical applications in analysis and mathematical physics. Consequently, various compelling structure theorems and vanishing properties of complete Riemannian manifolds satisfying the Ricci curvature lower bound and the weighted Poincar\'{e} inequality were derived (see \cite{[CCW], [ChZhou], [DLW], [DS19], [Fu], [FuY], [HL], [LW06], [Mun], [Vie], [Wang]}). Following the notation given in \cite{[LW06]}, an $n$-dimensional complete Riemannian manifold $M$ is said to satisfy a weighted Poincar\'{e} inequality with a nonnegative weight function $\rho(x)$, if the inequality
\begin{equation}\label{poincare}
    \int_M \rho(x)\phi^2(x)\,dV \leq \int_M \abs{\nabla \phi}^2\,dV
\end{equation}
holds true for every compactly supported smooth function $\phi$ on $M$. For the sake of brevity, we say that $M$ has property $\prop{\rho}$ if $M$ satisfies a weighted Poincar\'{e} inequality for some nonnegative weight function $\rho(x)$.

Denote the greatest lower bound of the spectrum of the Laplacian acting on $L^2$ functions by $\lambda_1(M)$ and suppose that it is positive, then the variational principle for $\lambda_1(M)$ asserts the validity of the following Poincar\'{e} inequality
    $$\lambda_1(M) \int_M \phi^2(x)\,dV \leq \int_M \abs{\nabla \phi}^2\,dV$$
for every $\phi \in C_{0}^{\infty}(M)$. This fact along with a specific lower bound for the Ricci curvature are the key ideas to prove the following vanishing theorem for $L^2$ integrable harmonic 1-forms on K\"{a}hler manifolds (see \cite{[Lam10]}).
\begin{theorem}[\cite{[Lam10]}]\label{lam1}
    Let $M^{2n}$ be a $2n$-dimensional complete K\"{a}hler manifold with $\lambda_1(M)>0$. Assume the Ricci curvature of $M$ satisfies
    $$Ric_M \geq -2\lambda_1(M) + \epsilon$$
    for some positive constant $\epsilon$. Then $H^1(L^2(M))=0$.
\end{theorem}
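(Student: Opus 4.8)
The plan is to run the Böchner technique on an arbitrary $L^2$ harmonic $1$-form $\omega$, using the Kähler structure to upgrade the usual Kato inequality so that the threshold $2\lambda_1(M)$ appears. Fix $\omega \in H^1(L^2(M))$; by elliptic regularity it is smooth and satisfies $d\omega = \delta\omega = 0$, and I set $u=\abs{\omega}$. The Böchner--Weitzenböck formula for a harmonic $1$-form gives $\tfrac12\Delta u^2 = \abs{\nabla\omega}^2 + Ric_M(\omega,\omega)$, which away from the zero set of $\omega$ is the same as
\[
u\Delta u = \abs{\nabla\omega}^2 - \abs{\nabla u}^2 + Ric_M(\omega,\omega).
\]

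The decisive Kähler input is a refined Kato inequality. First I would recall that a harmonic $1$-form on a Kähler manifold splits as $\omega = \alpha + \bar\alpha$ with $\alpha=\omega^{1,0}$ holomorphic, so that $\nabla''\alpha=0$ and $\abs{\omega}^2 = 2\abs{\alpha}^2$. Applying the Cauchy--Schwarz inequality to $\partial\abs{\alpha}^2 = \innerproduct{\nabla'\alpha}{\alpha}$ and tracking the factors of $2$ coming from the real-versus-complex norms then yields the sharpened estimate
\[
\abs{\nabla\omega}^2 \geq 2\,\abs{\nabla u}^2,
\]
which is strictly stronger than the generic Riemannian constant $\tfrac{2n}{2n-1}$ and is exactly what produces the threshold $2\lambda_1(M)$. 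Inserting this together with the curvature hypothesis $Ric_M(\omega,\omega)\geq(-2\lambda_1(M)+\epsilon)u^2$ into the Böchner identity produces the pointwise inequality
\[
u\Delta u \geq \abs{\nabla u}^2 - \big(2\lambda_1(M)-\epsilon\big)u^2.
\]

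To extract vanishing I would test this against $\phi^2$ for a Lipschitz cutoff $\phi$ equal to $1$ on the geodesic ball $B(R)$, supported in $B(2R)$, with $\abs{\nabla\phi}\leq C/R$. Writing $I=\int_M\phi^2\abs{\nabla u}^2$, $J=\int_M\phi^2 u^2$, $K=\int_M\phi u\innerproduct{\nabla\phi}{\nabla u}$ and $L=\int_M u^2\abs{\nabla\phi}^2$, integration by parts turns the differential inequality into $\big(2\lambda_1(M)-\epsilon\big)J \geq 2I + 2K$, while the variational characterization of $\lambda_1(M)$ applied to $\phi u$ gives $\lambda_1(M)J \leq I + 2K + L$. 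The idea is to use Young's inequality $2\abs{K}\leq \tau I + \tau^{-1}L$ with a small parameter $\tau>0$ to remove the cross term, so that the upper bound for $I$ from the first relation and the lower bound from the second can be combined into
\[
\big[(2-\tau)\lambda_1(M) - (1+\tau)(2\lambda_1(M)-\epsilon)\big]\,J \leq C(\tau)\,L.
\]
Since $\omega\in L^2$ forces $L\leq (C/R^2)\int_{B(2R)}u^2 \to 0$ while $J\to\norm{\omega}_{L^2}^2$ as $R\to\infty$, and since the bracket equals $\epsilon>0$ at $\tau=0$ and hence stays positive for small $\tau$, this yields $\norm{\omega}_{L^2}^2\leq 0$, so $\omega\equiv 0$ and $H^1(L^2(M))=0$.

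I expect the main obstacle to be the refined Kato inequality itself: obtaining the sharp constant $2$ rather than $\tfrac{2n}{2n-1}$ requires genuinely exploiting the holomorphicity of $\omega^{1,0}$, and it is precisely this constant that makes the coefficient of $I$ cancel after combining the Böchner and Poincaré inequalities, leaving the $\epsilon$-gap to drive the conclusion. The remaining analytic points—justifying the integrations by parts on a complete manifold through the cutoff exhaustion and handling the zero set of $\omega$, where the Kato inequality is read weakly—are routine once $\phi$ is chosen as above.
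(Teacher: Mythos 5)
Your overall strategy is the right one, and it is essentially the scheme behind this theorem: the paper does not reprove the statement (it quotes it from \cite{[Lam10]}), but the ingredients it attributes to Lam and then reuses for its own Theorem \ref{1.3} are exactly yours --- a refined Kato inequality with constant $2$, the Bochner--Weitzenb\"ock formula, the Poincar\'e inequality coming from $\lambda_1(M)>0$ tested on $\phi\abs{\omega}$, and a cutoff argument exploiting $\omega\in L^2$. Your integral bookkeeping is also correct: combining $(2\lambda_1-\epsilon)J\geq 2I+2K$ with $\lambda_1 J\leq I+2K+L$ and absorbing the cross term $K$ via Young's inequality leaves a positive $\epsilon$-gap in front of $J$, and since $L\to0$ while $J\to\norm{\omega}_{L^2}^2$ as $R\to\infty$, this forces $\omega\equiv0$.

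There is, however, one genuine gap, and it sits exactly where you wrote ``I would recall that a harmonic $1$-form on a K\"ahler manifold splits as $\omega=\alpha+\bar\alpha$ with $\alpha=\omega^{1,0}$ holomorphic.'' This is not a pointwise consequence of $d\omega=\delta\omega=0$; for complex dimension $n\geq2$ it is false for general harmonic $1$-forms. For instance, on $\C^2$ take $\alpha=i\bar z_2\,dz_1-i\bar z_1\,dz_2$; then $\omega=\alpha+\bar\alpha$ is closed and co-closed, yet $\bar\partial\alpha\neq0$ (closedness only forces $\bar\partial\alpha$ to be purely imaginary, and co-closedness only forces it to be primitive, i.e. $\Lambda\bar\partial\alpha=0$). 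Holomorphicity of $\omega^{1,0}$ is a \emph{global} fact: one uses the K\"ahler identity $\Delta=2\Delta_{\bar\partial}$ to get $\bar\partial^*\bar\partial\alpha=0$, and then a Gaffney-type cutoff argument --- valid precisely because $M$ is complete and $\alpha\in L^2$ --- to conclude $\bar\partial\alpha=0$. This is the content of Lam's lemma that $d*(\omega\wedge\Omega)=0$ for every $L^2$ harmonic $1$-form ($\Omega$ the K\"ahler form), which the paper singles out as the crucial step, and as the step that breaks down for $L^p$ with $p\neq2$ (which is why the paper's generalization, Theorem \ref{1.3}, retreats to pluriharmonic functions). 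So the difficulty you anticipated in the Cauchy--Schwarz computation is misplaced --- that computation is elementary once $\alpha$ is holomorphic --- and the real work is establishing holomorphicity itself; your proof is complete once you insert this additional cutoff argument rather than a ``recall.''
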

A similar vanishing result was also obtained for the class of quaternionic K\"{a}hler manifolds.
\begin{theorem}[\cite{[Lam10]}]\label{Lam12}
    Let $M^{4n}$ be a $4n$-dimensional complete quaternionic K\"{a}hler manifold. Assume that $\lambda_1(M)>0$ and that the Ricci curvature of $M$ satisfies
    $$Ric_M \geq -\frac{4}{3}\lambda_1(M) + \epsilon$$
    for some positive constant $\epsilon$. Then $H^1(L^2(M))=0$.
\end{theorem}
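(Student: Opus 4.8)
The plan is to apply the B\"ochner technique to an arbitrary $L^2$ harmonic $1$-form $\omega$ on $M^{4n}$ and to close the resulting estimate using the Poincar\'e inequality $\lambda_1(M)\int_M\phi^2\,dV\le\int_M\abs{\nabla\phi}^2\,dV$ guaranteed by $\lambda_1(M)>0$. Setting $u=\abs{\omega}$, the harmonicity of $\omega$ (equivalently $d\omega=0$ and $\delta\omega=0$) together with the B\"ochner--Weitzenb\"ock formula yields, on the open set where $\omega\neq 0$,
\[
u\,\Delta u=\abs{\nabla\omega}^2-\abs{\nabla u}^2+Ric_M(\omega,\omega),
\]
where $\Delta$ denotes the rough Laplacian; the zero set of $\omega$ can be absorbed at the end by the usual regularization $u_\delta=\sqrt{\abs{\omega}^2+\delta^2}$ and letting $\delta\to 0$. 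Invoking the curvature hypothesis $Ric_M\ge-\tfrac43\lambda_1(M)+\epsilon$ then reduces the whole problem to bounding the Kato defect $\abs{\nabla\omega}^2-\abs{\nabla u}^2$ from below.

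The decisive ingredient --- and the step I expect to be the main obstacle --- is a refined Kato inequality adapted to the quaternionic K\"ahler geometry, namely
\[
\abs{\nabla\omega}^2\ge\tfrac43\,\abs{\nabla u}^2
\]
for harmonic $1$-forms, \emph{with a constant independent of $n$}. The generic refined Kato constant for closed and co-closed $1$-forms in real dimension $4n$ is only $\tfrac{4n}{4n-1}$, which degenerates to $1$ as $n\to\infty$ and would be useless here; the improvement to the dimension-free value $\tfrac43$ is exactly what produces the clean Ricci threshold $-\tfrac43\lambda_1(M)$. I would establish it by decomposing the covariant derivative $\nabla\omega\in T^*M\otimes T^*M$ under the holonomy group $Sp(n)\cdot Sp(1)$, using the identification $T^*M\cong E\otimes H$ with $E$ the standard $Sp(n)$-representation and $H$ the standard $Sp(1)$-representation. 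The conditions $d\omega=0$ and $\delta\omega=0$ annihilate several irreducible summands, and comparing the full norm $\abs{\nabla\omega}^2$ with the component carried by $\nabla u$ yields the factor $\tfrac43$; this is the quaternionic K\"ahler entry in the Calderbank--Gauduchon--Herzlich classification of refined Kato constants.

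Granting this, the two facts above combine into the pointwise differential inequality
\[
u\,\Delta u\ge\tfrac13\,\abs{\nabla u}^2+\left(-\tfrac43\lambda_1(M)+\epsilon\right)u^2 .
\]
I would then multiply by $\phi^2$ for a compactly supported cutoff $\phi$, integrate over $M$, and integrate by parts on the left. Coupling the outcome with the Poincar\'e inequality applied to the test function $\phi u$ --- taken in the linear combination whose weight $\tfrac43$ is dictated by the Kato constant --- makes the interior gradient term $\int_M\phi^2\abs{\nabla u}^2\,dV$ cancel exactly, leaving
\[
\epsilon\int_M\phi^2u^2\,dV\le\tfrac23\int_M\phi\,u\,\innerproduct{\nabla\phi}{\nabla u}\,dV+\tfrac43\int_M u^2\abs{\nabla\phi}^2\,dV .
\]
This is the point at which the strict positivity of $\epsilon$ is consumed: after moving the Poincar\'e term across, the coefficient of $\int_M\phi^2u^2\,dV$ is strictly negative, so only the cutoff error terms on the right can survive.

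Finally I would choose $\phi=\phi_R$ with $\phi_R\equiv 1$ on $B_R$, supported in $B_{2R}$, and $\abs{\nabla\phi_R}\le C/R$. A preliminary application of the inequality above, together with $\omega\in L^2$ and Young's inequality, shows $\int_M\abs{\nabla u}^2\,dV<\infty$; the Cauchy--Schwarz estimate $\int_M\phi_R\,u\,\innerproduct{\nabla\phi_R}{\nabla u}\,dV\le\bigl(\int_M\phi_R^2\abs{\nabla u}^2\,dV\bigr)^{1/2}\bigl(\int_M u^2\abs{\nabla\phi_R}^2\,dV\bigr)^{1/2}$ then forces the whole right-hand side to vanish as $R\to\infty$, since $\int_M u^2\abs{\nabla\phi_R}^2\,dV\le (C/R^2)\,\norm{\omega}_{L^2}^2\to 0$. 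Hence $\epsilon\int_M u^2\,dV\le 0$, and as $\epsilon>0$ this gives $\omega\equiv 0$, i.e. $H^1(L^2(M))=0$.
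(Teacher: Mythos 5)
Your overall skeleton (B\"ochner formula $+$ refined Kato $+$ Poincar\'e applied to $\phi\abs{\omega}$ $+$ cutoff) is the same as the one behind the paper's treatment of this theorem, but there is a genuine gap at the step you yourself call decisive. The refined Kato inequality $\abs{\nabla\omega}^2\geq\frac{4}{3}\abs{\nabla\abs{\omega}}^2$ is \emph{not} a pointwise algebraic consequence of $d\omega=0$, $\delta\omega=0$ and the holonomy reduction to $Sp(n)\cdot Sp(1)$, and no decomposition of $\nabla\omega$ under that group can produce it. Indeed, a closed and coclosed $1$-form is locally $du$ with $u$ harmonic, and on any Riemannian manifold (quaternionic K\"ahler or not) one can produce local harmonic functions whose traceless Hessian at a prescribed point is arbitrary (prescribe the quadratic jet $Q$ with traceless coefficient matrix and correct it by solving $\Delta v=-\Delta Q$ on a small ball; the correction to the Hessian at the center is $o(1)$ as the radius shrinks). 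Holonomy constrains curvature, not $2$-jets of harmonic functions, so the best \emph{pointwise} constant for merely harmonic $1$-forms is the generic dimension-dependent one --- this is exactly what the paper's Remark 3.2 records (it quotes $\frac{4n+1}{4n}$), and it degenerates as $n\to\infty$, which is the very problem you acknowledge but do not actually overcome. The constant $\frac{4}{3}$ becomes available only after one knows the \emph{additional} first-order equation $d*(\wedgeproduct{\omega}{\Omega})=0$, where $\Omega$ is the parallel $4$-form of the quaternionic structure (``quaternionic harmonicity'' in the sense of Kong--Li--Zhou), and that equation is global, not pointwise: for an $L^2$ harmonic $1$-form it is proved by a cutoff/integration-by-parts argument that consumes the $L^2$ hypothesis. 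This is precisely Lam's route, reproduced in Section 2 of the paper in the $L^p$ setting (Lemma 2.5 and Corollary 2.6), where the need for this global step is what forces the extra volume-growth hypothesis when $p\neq2$. The Calderbank--Gauduchon--Herzlich classification does not contain the entry you invoke: their constants attach to the full first-order system the section satisfies, and for $d\oplus\delta$ alone no improvement over the generic constant is possible.

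A secondary, fixable flaw: your exact-cancellation bookkeeping leaves no gradient term on the left-hand side, so the claim that ``a preliminary application of the inequality above, together with Young's inequality, shows $\int_M\abs{\nabla u}^2<\infty$'' does not work --- that inequality contains nothing that dominates $\int_M\phi^2\abs{\nabla u}^2$. The standard repair (and what the paper effectively does in its proof of Theorem 1.4) is to keep slack in the absorption: write $-\frac{4}{3}\lambda_1\int_M\phi^2u^2\geq-\bigl(\frac{4}{3}-\delta\bigr)\int_M\abs{\nabla(\phi u)}^2-\delta\lambda_1\int_M\phi^2u^2$ with $\delta=\epsilon/(2\lambda_1)$, so that $\delta\int_M\phi^2\abs{\nabla u}^2+\frac{\epsilon}{2}\int_M\phi^2u^2$ survives on the left; Young's inequality and $\abs{\nabla\phi}\leq C/R$ then give finiteness and vanishing in a single pass. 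With that adjustment, and with the Kato inequality obtained by the global argument above rather than by representation theory, your proof closes and coincides in substance with the paper's.
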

In this article, we introduce a   weighted Poincar\'{e} inequality assumption by only letting $M$ satisfy the property $\prop{\rho}$ for some nonnegative weight function $\rho$ and utilize it to derive the extended versions of the above vanishing theorems.
\begin{theorem} \label{1.3}
    Let $M$ be a $2n$-dimensional complete K\"{a}hler manifold having property $\prop{\rho}$ and $u$ a pluriharmonic function on $M$ with $\int_M\abs{\nabla u}^p<\infty\ (p>1)$. Assume that the Ricci curvature of $M$ satisfies
    $$Ric_M \geq -k\rho$$
    where $k < \frac{4}{p}$ if $p\geq2$ and $k<\frac{4p}{(4-p)^2}$ if $1<p<2$. Then $u$ is a constant function.
\end{theorem}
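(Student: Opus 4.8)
The plan is to run the B\"ochner technique on the function $h=\abs{\nabla u}$, to reduce the B\"ochner--Weitzenb\"ock formula to a scalar differential inequality by means of a refined Kato inequality special to pluriharmonic functions, and then to absorb the curvature term using the weighted Poincar\'e inequality of property $\prop{\rho}$ tested against cutoff functions exhausting $M$. First I would record that a pluriharmonic function satisfies $u_{i\bar{j}}=0$ in holomorphic coordinates, equivalently that its real Hessian anticommutes with the complex structure $J$, namely $\nabla^2u(JX,JY)=-\nabla^2u(X,Y)$. This forces the eigenvalues of $\nabla^2u$ to occur in pairs $\pm\lambda_1,\dots,\pm\lambda_n$, so that $\abs{\nabla^2u}^2=2\sum_i\lambda_i^2\ge 2\lambda_{\max}^2$. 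Since differentiating $h^2=\abs{\nabla u}^2$ gives $h\,\nabla h=\nabla^2u(\nabla u,\cdot)$, we get $\abs{\nabla h}^2\le\lambda_{\max}^2$ pointwise, whence the refined Kato inequality
\[
\abs{\nabla h}^2\ \le\ \tfrac12\,\abs{\nabla^2u}^2 .
\]
This is the conceptual heart of the argument: it is exactly the gain over the trivial Kato bound $\abs{\nabla h}^2\le\abs{\nabla^2u}^2$ that produces a usable positive gradient term below.

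Next I would combine this with the B\"ochner formula $\tfrac12\Delta h^2=\abs{\nabla^2u}^2+Ric_M(\nabla u,\nabla u)$ and the hypothesis $Ric_M\ge-k\rho$. Writing $\tfrac12\Delta h^2=h\Delta h+\abs{\nabla h}^2$ and inserting $\abs{\nabla^2u}^2\ge 2\abs{\nabla h}^2$ yields, on the open set $\{h>0\}$,
\[
h\,\Delta h\ \ge\ \abs{\nabla h}^2-k\rho\,h^2 .
\]
The remainder is a weighted integral estimate. I would test this inequality against $h^{p-2}\phi^2$, where $\phi$ equals $1$ on $B(R)$, is supported in $B(2R)$, and satisfies $\abs{\nabla\phi}\le C/R$; after integrating by parts I would apply the weighted Poincar\'e inequality of $\prop{\rho}$ to the function $h^{p/2}\phi$ to dominate $\int_M\rho\,h^p\phi^2$. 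Collecting terms, the curvature contributions cancel against the Poincar\'e bound and one arrives at an inequality of the schematic form
\[
C(p,k)\int_M h^{p-2}\phi^2\abs{\nabla h}^2\ \le\ \int_M h^p\abs{\nabla\phi}^2 ,
\]
in which the coefficient $C(p,k)$ is positive precisely under the stated bound on $k$. Since $\int_M h^p<\infty$, the right-hand side is $O(R^{-2})\!\int_{B(2R)}h^p\to0$ as $R\to\infty$, forcing $\nabla h\equiv0$ on $\{h>0\}$; hence $h$ is constant, and finiteness of the $L^p$ energy then forces $h\equiv0$, i.e.\ $u$ is constant.

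Two points carry the real work and determine the stated thresholds. The coefficient $C(p,k)$ is produced by estimating the cross term $\int_M h^{p-1}\phi\,\innerproduct{\nabla h}{\nabla\phi}$ through Young's inequality against $\int_M h^{p-2}\phi^2\abs{\nabla h}^2$ and $\int_M h^p\abs{\nabla\phi}^2$ with an optimal parameter, balanced against the factor $(2-kp)$ coming from the integration by parts and the expansion of $\abs{\nabla(h^{p/2}\phi)}^2$; tracking that optimal parameter is what yields the range $k<4/p$ when $p\ge2$. The genuinely delicate regime is $1<p<2$, where $h^{p-2}$ is singular along the critical set $\{\nabla u=0\}$, so that neither $h^{p/2}$ nor the integrals above are a priori admissible. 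There I would instead carry out the entire computation with the regularization $h_\varepsilon=(\abs{\nabla u}^2+\varepsilon)^{1/2}$, for which both the refined Kato inequality and the B\"ochner inequality persist, and then let $\varepsilon\to0$ by monotone and dominated convergence. The loss incurred in controlling the singular term through this regularization is what degrades the admissible range to $k<\tfrac{4p}{(4-p)^2}$, a bound that agrees with $4/p$ at $p=2$. The main obstacle is accordingly twofold: proving the pluriharmonic refined Kato inequality cleanly, and making the $p<2$ regularization rigorous so that the limiting inequality retains a strictly positive coefficient $C(p,k)$ under the sharp hypothesis.
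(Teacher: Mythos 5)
Your proposal is correct and its skeleton is exactly the paper's: the refined Kato inequality for pluriharmonic functions, the Bochner formula giving $h\Delta h\ge\abs{\nabla h}^2-k\rho h^2$ for $h=\abs{\nabla u}$, testing against $\phi^2h^{p-2}$ with a cutoff, Cauchy--Young on the cross term, and absorption of $\int_M\rho\phi^2h^p$ via the weighted Poincar\'e inequality applied to $\phi h^{p/2}$, producing the coefficient $p-\epsilon-k(1+\epsilon)p^2/4$, positive precisely when $k<4/p$. Two differences are worth recording. First, you prove the Kato inequality directly (the Hessian of a pluriharmonic function anticommutes with $J$, so its eigenvalues pair as $\pm\lambda_i$ and $\abs{\nabla^2u}^2=2\sum_i\lambda_i^2\ge2\lambda_{\max}^2\ge2\abs{\nabla\abs{\nabla u}}^2$); the paper merely cites this as Lemma 2.2 from Lam's work, so your self-contained argument is a genuine addition and it is correct. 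Second, and more substantively, in the range $1<p<2$ the paper regularizes only the weight: it tests with $\phi^2(h+\delta)^{p-2}$ and applies Poincar\'e to $\phi h(h+\delta)^{(p-2)/2}$; because $h$ and $h+\delta$ are different functions, the gradient expands into three terms, an extra Young parameter $\eta$ is needed, and its optimal value $\eta=(2-p)/2$ produces the factor $(4-p)^2/4$, whence the threshold $k<\frac{4p}{(4-p)^2}$. Your regularization $h_\varepsilon=(\abs{\nabla u}^2+\varepsilon)^{1/2}$ actually behaves better than you claim: $h_\varepsilon$ is smooth, strictly positive, satisfies the same differential inequality $h_\varepsilon\Delta h_\varepsilon\ge\abs{\nabla h_\varepsilon}^2-k\rho h_\varepsilon^2$ (using $\abs{\nabla h_\varepsilon}\le\abs{\nabla h}$ and, WLOG, $k\ge0$), and $\nabla(\phi h_\varepsilon^{p/2})$ has only two terms, so the $p\ge2$ computation runs verbatim for every $p>1$; letting $\varepsilon\to0$ by monotone convergence on the left and dominated convergence on the right then yields the conclusion under the weaker hypothesis $k<4/p$ for all $p>1$. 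So your assertion that this regularization ``degrades'' the admissible range to $k<\frac{4p}{(4-p)^2}$ is inaccurate: the degradation is an artifact of the paper's particular regularization, not of regularization per se, and your scheme proves a statement stronger than the theorem (which it therefore still implies, since $\frac{4p}{(4-p)^2}<\frac{4}{p}$ for $1<p<2$). One small point you should state explicitly: constancy of $h$ together with $\int_Mh^p<\infty$ forces $h\equiv0$ only because property $\prop{\rho}$ implies $M$ has infinite volume; finiteness of the energy alone does not suffice.
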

When $p=2$, by using a result of Li (\cite{[Li]}) which states that a harmonic function $u$ on a complete K\"ahler manifold with finite Dirichlet energy is pluriharmonic we obtain the following 
\begin{corollary}
   Let $M$ be a $2n$-dimensional complete K\"{a}hler manifold having property $\prop{\rho}$ and $u$ a harmonic function on $M$ with $\int_M\abs{\nabla u}^2<\infty$. Assume that the Ricci curvature of $M$ satisfies
    $$Ric_M \geq -k\rho$$
    where $k <2$. Then $u$ is a constant function.
\end{corollary}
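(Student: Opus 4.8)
The plan is to obtain this corollary as a direct specialization of Theorem \ref{1.3} to the exponent $p=2$, using Li's regularity result to bridge the gap between mere harmonicity and pluriharmonicity. First I would recall the precise hypotheses here: $u$ is assumed only harmonic (not a priori pluriharmonic) and to have finite Dirichlet energy $\int_M \abs{\nabla u}^2 < \infty$. The passage to pluriharmonicity is exactly the content of the theorem of Li in \cite{[Li]} quoted just above, which guarantees that a harmonic function on a complete K\"ahler manifold with $L^2$-integrable gradient is automatically pluriharmonic. Thus the opening step is simply to invoke this result and upgrade $u$ to a pluriharmonic function, which is precisely the structural regularity required to enter the hypotheses of Theorem \ref{1.3}.

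Once $u$ is known to be pluriharmonic with $\int_M \abs{\nabla u}^2 < \infty$, I would apply Theorem \ref{1.3} with $p = 2$. Since $2 \geq 2$, the relevant curvature threshold in that theorem is $k < 4/p = 2$, which is exactly the hypothesis $k < 2$ assumed in the corollary. Hence the condition $Ric_M \geq -k\rho$ with $k < 2$ falls squarely inside the admissible range of Theorem \ref{1.3}, and that theorem forces $u$ to be constant, completing the argument.

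The only point worth flagging is that there is essentially no independent obstacle to overcome: the corollary's hypothesis is genuinely weaker than that of Theorem \ref{1.3} (harmonicity rather than pluriharmonicity), so the entire substantive content is carried by the two imported results, with Li's theorem supplying the missing structural assumption and Theorem \ref{1.3} supplying the vanishing conclusion. The one verification I would make explicit is that the numerical threshold matches at the borderline exponent: both constants appearing in Theorem \ref{1.3} reduce to $2$ when $p=2$, since $4/p = 2$ and $4p/(4-p)^2 = 2$ both hold there, so the value $p=2$ is handled consistently and the sharp constant $k < 2$ is recovered.
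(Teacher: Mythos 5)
Your proposal is correct and is exactly the paper's intended argument: the authors derive this corollary by citing Li's theorem to upgrade the finite-Dirichlet-energy harmonic function to a pluriharmonic one, then applying Theorem \ref{1.3} with $p=2$, where the threshold $4/p$ equals $2$. Your consistency check at the borderline exponent is a nice touch but not needed, since $p=2$ falls in the $p\geq 2$ case of Theorem \ref{1.3}.
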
 
Let $\omega=du$, then we can see that that the above corollary goes back to Theorem \ref{lam1}. Hence, Theorem \ref{1.3} can be considered as a partial generalization of Theorem \ref{lam1}. It is interesting to see  whether Theorem \ref{1.3} holds true for harmonic functions or not.
\begin{theorem} \label{1.4}
    Let $M^{4n}$ be a complete quaternionic K\"{a}hler manifold having $$Ric_M=-4(n+2)g $$ which satisfies a polynomial volume growth condition, i.e,
        $$Vol(B_o(r))\leq Cr^m, m\geq1.$$
    for sufficiently large $r$ and some fixed real number $C$. Let $p\in \R$ such that
        $$2<p\leq\frac{2m}{m-1}
 \text{ if $m>1$, and }1<p<\infty \text{ if }m=1.$$
   Suppose that the first eigenvalue $\lambda_1$ of the Laplacian has a lower bound
    $$\lambda_1>\frac{4(n+2)p^2}{\bigl(4p-\frac{8}{3})}.$$
    Then $H^1(L^p(M))=0$.
\end{theorem}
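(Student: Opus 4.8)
The plan is to prove Theorem 1.4 by the Bochner technique applied to an $L^p$-integrable harmonic $1$-form $\omega$ on the quaternionic Kähler manifold $M^{4n}$, combining a refined Kato-type inequality with the variational characterization of $\lambda_1$. First I would recall that on a quaternionic Kähler manifold with $\mathrm{Ric}_M = -4(n+2)g$, the Weitzenböck formula for a harmonic $1$-form reads $\tfrac{1}{2}\Delta|\omega|^2 = |\nabla\omega|^2 + \mathrm{Ric}(\omega,\omega) = |\nabla\omega|^2 - 4(n+2)|\omega|^2$. The special geometry of the quaternionic Kähler case enters through the refined Kato inequality, which for harmonic $1$-forms on $M^{4n}$ should take the sharpened form $|\nabla\omega|^2 \ge \bigl(1 + \tfrac{2}{3}\bigr)|\nabla|\omega||^2 = \tfrac{5}{3}|\nabla|\omega||^2$ (the constant $\tfrac{2}{3}$ being responsible for the $\tfrac{8}{3}$ appearing in the denominator of the eigenvalue bound). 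Setting $f = |\omega|$, these two facts combine to yield a differential inequality of Bochner type, namely $f\Delta f \ge \tfrac{2}{3}|\nabla f|^2 - 4(n+2)f^2$ in the distributional sense, valid away from the zero set of $\omega$.

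Next I would run the standard integration-by-parts argument against a test function of the form $\phi^2 f^{q}$ for a suitable exponent $q$ tied to $p$, where $\phi$ is a cutoff supported on the geodesic ball $B_o(2r)$ with $\phi \equiv 1$ on $B_o(r)$ and $|\nabla\phi| \le C/r$. The goal is to produce, after multiplying the differential inequality by $\phi^2 f^{p-2}$ and integrating, an inequality in which the gradient term $\int \phi^2 f^{p-2}|\nabla f|^2$ appears with a favorable coefficient coming from the refined Kato constant, while the variational inequality $\lambda_1\int \psi^2 \le \int|\nabla\psi|^2$ applied to $\psi = \phi f^{p/2}$ bounds the curvature term $4(n+2)\int\phi^2 f^p$ from above. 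Balancing these two uses of the gradient energy — and absorbing the cross terms $\int \phi f^{p-1}\nabla\phi\cdot\nabla f$ via Cauchy–Schwarz with a small parameter $\varepsilon$ — should leave a coefficient on $\int\phi^2 f^p$ that is strictly positive precisely when $\lambda_1 > \tfrac{4(n+2)p^2}{4p - 8/3}$, forcing $\int_{B_o(r)}|\nabla f|^2 f^{p-2}$ and eventually $f$ itself to vanish as $r\to\infty$.

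The role of the polynomial volume growth hypothesis $\mathrm{Vol}(B_o(r)) \le Cr^m$ together with the upper constraint $p \le \tfrac{2m}{m-1}$ is to control the boundary/cutoff error terms: the term $\int f^p|\nabla\phi|^2 \le (C/r^2)\int_{B_o(2r)\setminus B_o(r)} f^p$ must be shown to tend to $0$ along a sequence $r_j \to \infty$. This is where I would invoke the $L^p$-integrability of $\omega$ (so that $\int_M f^p < \infty$ since $\omega \in H^1(L^p(M))$) and a De Giorgi–Nash–Moser or direct volume-comparison estimate to convert the global $L^p$ bound into decay of the cutoff remainder; the precise interplay between $p$, $m$, and the $r^{-2}$ factor is exactly what the bound $p \le \tfrac{2m}{m-1}$ is engineered to make work. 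The main obstacle I anticipate is twofold: first, establishing the sharp refined Kato constant $\tfrac{5}{3}$ (equivalently the extra $\tfrac{2}{3}$) genuinely valid for harmonic $1$-forms in the quaternionic Kähler setting rather than the generic Riemannian constant — this requires decomposing $\nabla\omega$ using the quaternionic structure and is presumably supplied by one of the refined Kato inequalities the paper advertises; and second, the delicate bookkeeping in the cutoff argument that ensures the coefficient of $\int\phi^2 f^p$ stays positive under the stated eigenvalue threshold while simultaneously the error terms vanish. Once both the Kato constant and the decay of the remainder are secured, the conclusion $f \equiv 0$, and hence $\omega = 0$ and $H^1(L^p(M)) = 0$, follows.
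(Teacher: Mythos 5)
Your overall skeleton --- Bochner formula, a refined Kato inequality, the variational Poincar\'e inequality for $\lambda_1$, and a cutoff/integration-by-parts argument --- is the same as the paper's. However, two things are off, and one of them is a genuine gap. First, your Kato constant is wrong. You posit $|\nabla\omega|^2 \geq \tfrac{5}{3}|\nabla|\omega||^2$ and claim the extra $\tfrac{2}{3}$ is ``responsible for the $\tfrac{8}{3}$'' in the eigenvalue threshold. Check the arithmetic: if the Kato constant is $1+c$, the test-function argument with $\phi^2 h^{p-2}$ produces the coefficient $p-1+c-\epsilon-\tfrac{kp^2(1+\epsilon)}{4}$, where $k=4(n+2)/\lambda_1$, so positivity for small $\epsilon$ requires $k<\tfrac{4(p-1+c)}{p^2}$. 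The stated threshold $\lambda_1>\tfrac{4(n+2)p^2}{4p-8/3}$ corresponds to $c=\tfrac13$, i.e.\ to the constant $\tfrac43$ (which is exactly what the paper proves in Corollary 2.6), whereas your $c=\tfrac23$ would give $4p-\tfrac43$ in the denominator. Your stronger constant would formally still imply the theorem, but you give no argument for it, and no such inequality is available.

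Second, and this is the real gap: you assign the polynomial volume growth hypothesis and the restriction $2<p\leq\tfrac{2m}{m-1}$ to the wrong part of the proof. They are not needed to control the cutoff error: that term is simply bounded by $\tfrac{C^2}{r^2}\int_M h^p\to 0$ using $|\nabla\phi|\leq C/r$ and $\int_M h^p<\infty$; no Moser iteration or volume comparison enters there. Their actual role is to establish the key identity $d*(\omega\wedge\Omega)=0$ for $L^p$ harmonic $1$-forms (Lemma 2.5 of the paper), where $\Omega$ is the parallel $4$-form of the quaternionic structure: a H\"older-plus-cutoff estimate bounds $\int_{B_o(r)}|d*(\omega\wedge\Omega)|^q$ by $C r^{m(2-q)-q}\bigl(\int_{B_o(2r)\setminus B_o(r)}|\omega|^p\bigr)^{q/p}$ with $q$ the conjugate exponent, and the exponent $m(2-q)-q$ is nonpositive precisely when $p\leq\tfrac{2m}{m-1}$. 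This identity is what makes $\omega$ quaternionic harmonic, and only for quaternionic harmonic forms does the refined Kato inequality with constant $\tfrac43$ hold; a generic harmonic $1$-form on $M^{4n}$ only satisfies the dimensional constant $\tfrac{4n+1}{4n}<\tfrac43$, which does not yield the stated threshold. Since your proposal never proves any improved Kato inequality --- you explicitly defer it as ``presumably supplied'' --- and mislocates the hypotheses that make it possible, the proof as proposed does not close.
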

We want to note that the bound of the constant  $p$ in Theorem \ref{1.4} is slightly insufficient due to some technical reasons arising from the quaternionic structure of $M$. The polynomial volume growth condition can be removed if $p=2$.
On the other hand, it is worth to note that due to Theorem 1.2 in \cite{[KLZ08]}, if $(M^{4n}, g)$ is a complete quaternionic K\"{a}hler manifold ($n\geq2$) then it must be Einstein, that is there is a constant $\delta$ such that
$$Ric_M(g)=4(n+2)\delta g.$$
Hence, if $\delta=-1$ our assumption on $\lambda_1$ implies
$$Ric_M=-4(n+2)>-\frac{4p-\frac{8}{3}}{p^2}\lambda_1.$$
When $p=2$, it recovers the assumption in Theorem \ref{Lam12}.
Moreover, by Corollary 2.6 in \cite{zhang}, we have the following stronger result.
\begin{proposition}
   Let $M^{4n}$ be a $4n$-dimensional complete quaternionic K\"{a}hler manifold $(n\geq2)$ with $\delta\geq0$
    then for any $p>0$, we have
    $H^1(L^p(M))=0$.
\end{proposition}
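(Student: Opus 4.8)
The plan is to strip the statement down to a curvature condition and then run a Bochner--Kato--cutoff argument on the pointwise norm of the form. First I would invoke Theorem 1.2 of \cite{[KLZ08]}: since $n\geq2$, the manifold $M$ is Einstein, so $Ric_M=4(n+2)\delta g$, and the hypothesis $\delta\geq0$ gives $Ric_M\geq0$. This is the only place the quaternionic structure and the restriction $n\geq2$ are used; everything afterward is a statement about complete manifolds of nonnegative Ricci curvature, which is exactly the setting of Corollary 2.6 in \cite{zhang}.

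Next, let $\omega$ be a harmonic $1$-form with $\int_M|\omega|^p<\infty$ and set $f=|\omega|$. The Bochner formula $\tfrac12\Delta|\omega|^2=|\nabla\omega|^2+Ric(\omega,\omega)$, together with $Ric_M\geq0$ and the refined Kato inequality $|\nabla\omega|^2\geq\frac{m}{m-1}|\nabla f|^2$ with $m=4n$, yields, away from the zero set of $\omega$, the differential inequality
\[
f\,\Delta f\;\geq\;\frac{1}{m-1}\,|\nabla f|^2\;\geq\;0 .
\]
Equivalently, a suitable power $f^{\alpha}$ with $\alpha\geq\frac{m-2}{m-1}$ is (weakly) subharmonic. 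Recording this power is what allows the argument to cover the full range $p>0$ rather than only $p\geq2$, since then $f^{\alpha}\in L^{p/\alpha}$ with the exponent pushed above $1$.

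I would then test the inequality against $f^{p-2}\phi^2$, where $\phi$ is a standard cutoff equal to $1$ on $B_o(R)$, vanishing outside $B_o(2R)$, with $|\nabla\phi|\leq C/R$. Integrating by parts and absorbing the cross term by Young's inequality gives $C_1\int_{B_o(R)}f^{p-2}|\nabla f|^2\leq \frac{C_2}{R^2}\int_{B_o(2R)\setminus B_o(R)}f^p$. Because $\int_M f^p<\infty$, the right-hand side tends to $0$ as $R\to\infty$, forcing $\nabla f\equiv0$; hence $f=|\omega|$ is constant and $\omega$ is parallel. This cutoff computation is routine and is, in essence, the content of the corollary invoked from \cite{zhang}.

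The main obstacle, and the only genuinely non-formal step, is ruling out a nonzero parallel limit, i.e.\ showing that $f\equiv c$ forces $c=0$. If $M$ is noncompact I would use the Calabi--Yau estimate: a complete noncompact manifold with $Ric_M\geq0$ has infinite volume, so $\int_M c^p=c^p\,\mathrm{Vol}(M)<\infty$ is impossible unless $c=0$, giving $\omega\equiv0$. If $M$ is compact I would return to the integrated Bochner identity $\int_M|\nabla\omega|^2+\int_M Ric(\omega,\omega)=0$: when $\delta>0$ the Ricci term is strictly positive unless $\omega\equiv0$, while the borderline Ricci-flat case is the delicate point, excluded through the quaternionic hypotheses ($n\geq2$, $\delta\geq0$) underlying \cite{zhang}. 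In every case one obtains $H^1(L^p(M))=0$, as claimed.
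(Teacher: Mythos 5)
Your opening reduction --- Einstein via Theorem 1.2 of \cite{[KLZ08]}, so that $\delta\geq0$ gives $Ric_M\geq0$, then invoke Corollary 2.6 of \cite{zhang} --- is literally the paper's entire proof: the proposition is stated there with no argument beyond this citation. The genuine gap lies in your attempt to substantiate that citation, because your Bochner--Kato--cutoff argument cannot reach the stated range $p>0$. The refined Kato inequality gives $f\Delta f\geq\frac{1}{m-1}\abs{\nabla f}^2$ (with $m=4n$, away from the zeros of $f$), so $f^{\alpha}$ is subharmonic precisely when $\alpha\geq\frac{m-2}{m-1}$; since $f\in L^p$ only gives $f^{\alpha}\in L^{p/\alpha}$, and Yau's theorem requires an integrability exponent strictly greater than $1$, this route works only when $p>\alpha$ for some admissible $\alpha$, i.e.\ only for $p>\frac{m-2}{m-1}$ (which is $6/7$ for $n=2$ and tends to $1$ as $n$ grows). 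Your claim that recording the power ``allows the argument to cover the full range $p>0$'' is therefore false. The direct cutoff computation has exactly the same threshold: testing $f\Delta f\geq\frac{1}{m-1}\abs{\nabla f}^2$ against $\phi^2 f^{p-2}$ and integrating by parts leaves the coefficient $p-1+\frac{1}{m-1}-\epsilon$ in front of $\int\phi^2 f^{p-2}\abs{\nabla f}^2$, which is nonpositive once $p\leq\frac{m-2}{m-1}$, so your constant $C_1$ cannot be chosen positive and nothing follows (even the stronger quaternionic Kato constant $\frac{4}{3}$ would only lower the threshold to $p>\frac{2}{3}$, never to all $p>0$). Handling every $p>0$ is precisely the non-routine content of Zhang's Corollary 2.6: it rests on the Li--Schoen mean value inequality for nonnegative subharmonic functions, valid for every exponent $q>0$ on manifolds with $Ric\geq0$ and proved by Moser iteration, combined with the Calabi--Yau volume lower bound $Vol(B_x(r))\geq cr$, so that $f(x)^q\leq\frac{C}{Vol(B_x(r))}\int_{B_x(r)}f^q\to0$ as $r\to\infty$. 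That machinery is the missing idea; it cannot be replaced by the cutoff computation you describe as routine.

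A second defect is your treatment of the compact case. Saying the compact Ricci-flat case is ``excluded through the quaternionic hypotheses'' is not an argument, and in fact no argument can exclude it: under the paper's own definition a flat torus $T^{4n}$ is a complete quaternionic K\"ahler manifold with $\delta=0$, and its nonzero parallel $1$-forms lie in $L^p$ for every $p>0$, so the statement fails there. Both the proposition and Zhang's corollary must be read for noncompact $M$, where $Ric_M\geq0$ forces infinite volume and thereby kills nonzero parallel forms; your integrated-Bochner fallback only disposes of $\delta>0$ (where $H^1(M,\R)=0$ anyway, by Myers and Bochner) and cannot close the $\delta=0$ case.
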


In the next of this paper, we prove vanishing theorems of pluriharmonic functions on complete K\"ahler manifolds satisfying a Sobolev type inequality. To state our result, at each point $x \in M$, let us denote $\lambda^{-}(x)=\min\{ 0,\lambda(x) \}$ where $\lambda(x)$ is the smallest eigenvalue of the Ricci curvature tensor of $M$.
\begin{theorem} \label{1.5}
    Let $M^{2n}$ be a $2n$-dimensional complete K\"{a}hler manifold satisfying the following Sobolev type inequality ($\nu>2$)
        $$\Biggl( \int_M \phi^{\frac{2\nu}{\nu-2}} \Biggl)^{\frac{\nu-2}{\nu}} \leq C_S\int_M \abs{\nabla \phi}^2,$$
    where $\phi$ is any compactly supported smooth function on $M$ and $C_S$ is a positive constant.  Suppose that $||\lambda^-||_{\nu/2} < \frac{4}{pC_S}$ if $p\geq2$ and $||\lambda^-||_{\nu/2} < \frac{4p}{(4-p)^2C_S}$ if $1<p<2$, where
        $$\norm{\lambda^-}_{\nu/2} := \Bigl(\int_M \bigl(\lambda^{-}\bigl)^{\frac{\nu}{2}}\Bigl)^{\frac{2}{\nu}}.$$
    Then every pluriharmonic function $u$ on $M$ with $\int_M\abs{\nabla u}^p<\infty(p>1)$ is a constant function.
\end{theorem}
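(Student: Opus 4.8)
The plan is to run a Bochner-type argument on the harmonic $1$-form $\omega=du$, in exactly the spirit of the proof of Theorem \ref{1.3}, with the weighted Poincar\'e inequality there replaced by a H\"older--Sobolev estimate; the quantity $B:=C_S\norm{\lambda^-}_{\nu/2}$ will play precisely the role of the constant $k$. First I would note that a pluriharmonic $u$ is in particular harmonic, so $\omega=du$ is a harmonic $1$-form, and that its Hessian $\nabla^2u$ anticommutes with the complex structure $J$; hence the eigenvalues of $\nabla^2u$ occur in pairs $\pm\mu_i$, which forces $\abs{\nabla^2u}^2=2\sum_i\mu_i^2\ge 2\,\abs{\nabla\abs{\nabla u}}^2$, i.e. the refined Kato inequality $\abs{\nabla\abs{\nabla u}}^2\le\tfrac12\abs{\nabla^2u}^2$. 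Combining this with the Weitzenb\"ock formula $\tfrac12\Delta\abs{\omega}^2=\abs{\nabla\omega}^2+\mathrm{Ric}(\omega,\omega)$ and the pointwise bound $\mathrm{Ric}(\omega,\omega)\ge\lambda^-\abs{\omega}^2$, I obtain, writing $f=\abs{\nabla u}$, the differential inequality
$$f\Delta f \ge \abs{\nabla f}^2 + \lambda^- f^2 \qquad \text{on } \{f>0\}.$$

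Next I would test this against $f^{p-2}\phi^2$ for a compactly supported $\phi$ and integrate by parts, which after collecting terms yields
$$p\int_M f^{p-2}\phi^2\abs{\nabla f}^2 + 2\int_M f^{p-1}\phi\,\innerproduct{\nabla f}{\nabla\phi} \le \int_M \abs{\lambda^-}\,f^{p}\phi^2 .$$
The essential new step, replacing the weighted Poincar\'e inequality, is to estimate the right-hand side by H\"older (with conjugate exponents $\tfrac\nu2$ and $\tfrac{\nu}{\nu-2}$) and then the Sobolev inequality applied to $g=f^{p/2}\phi$:
$$\int_M \abs{\lambda^-}\,(f^{p/2}\phi)^2 \le \norm{\lambda^-}_{\nu/2}\Bigl(\int_M (f^{p/2}\phi)^{\frac{2\nu}{\nu-2}}\Bigr)^{\frac{\nu-2}{\nu}} \le C_S\norm{\lambda^-}_{\nu/2}\int_M \abs{\nabla(f^{p/2}\phi)}^2 = B\int_M \abs{\nabla(f^{p/2}\phi)}^2 .$$
Expanding $\abs{\nabla(f^{p/2}\phi)}^2=\tfrac{p^2}{4}f^{p-2}\phi^2\abs{\nabla f}^2+p f^{p-1}\phi\innerproduct{\nabla f}{\nabla\phi}+f^{p}\abs{\nabla\phi}^2$ and substituting, the coefficient of $\int_M f^{p-2}\phi^2\abs{\nabla f}^2$ becomes $p-\tfrac{Bp^2}{4}$, which is positive precisely when $B<\tfrac4p$. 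For $p\ge2$ the weight $f^{p-2}$ is bounded near the zero set of $\omega$, so all integrations are legitimate; a Young inequality then absorbs the cross term and leaves $\int_M f^{p-2}\phi^2\abs{\nabla f}^2\le C\int_M f^{p}\abs{\nabla\phi}^2$. Choosing $\phi$ to be a cutoff equal to $1$ on $B_o(R)$ with $\abs{\nabla\phi}\le C/R$ and using $\int_M\abs{\nabla u}^p<\infty$ drives the right-hand side to $0$ as $R\to\infty$, so $f$ is constant; since the Sobolev inequality ($\nu>2$) forces infinite volume, $\int_M f^p<\infty$ gives $f\equiv0$ and $u$ is constant.

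The case $1<p<2$ is where the genuine work lies, and it is the step I expect to be the main obstacle. Here the weight $f^{p-2}$ is singular along the zero set of $\omega$, so the integration by parts above is not directly justified and one must regularize, for instance replacing $f$ by $f_\epsilon=(\abs{\omega}^2+\epsilon)^{1/2}$ and working with $f_\epsilon^{p-2}$ before letting $\epsilon\to0$. In the regularized identity the factors $\abs{\omega}^2/f_\epsilon^{2}\le1$ that appear in the gradient and cross terms cannot be discarded for free, and bounding the resulting singular contributions by Young's inequalities with explicit parameters costs a definite amount; this is exactly what degrades the admissible threshold from $\tfrac4p$ to the smaller value $\tfrac{4p}{(4-p)^2}$, the two values coinciding at $p=2$. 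I would carry out this bookkeeping carefully, keeping the Young parameters explicit (and, where convenient, optimizing them), and then pass first to the limit $\epsilon\to0$ and afterwards to the cutoff limit $R\to\infty$ exactly as above to conclude $\nabla u\equiv0$.

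Finally I would remark that the whole argument mirrors Theorem \ref{1.3} verbatim once $B=C_S\norm{\lambda^-}_{\nu/2}$ is identified with $k$: the Ricci lower bound $-k\rho$ and the inequality $\int_M\rho\psi^2\le\int_M\abs{\nabla\psi}^2$ used there are replaced by $\mathrm{Ric}\ge\lambda^-$ and by the H\"older--Sobolev bound $\int_M\abs{\lambda^-}\psi^2\le B\int_M\abs{\nabla\psi}^2$. The only place the two arguments differ in any essential way is the regularization required for $1<p<2$, which is also the only place the constant is likely not sharp.
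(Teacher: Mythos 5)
Your proposal follows the paper's own proof essentially step for step: the same refined Kato inequality $\abs{\nabla^2u}^2\ge 2\abs{\nabla\abs{\nabla u}}^2$ fed into the Bochner formula, the same test functions $\phi^2h^{p-2}$, the same H\"older--Sobolev substitution for the weighted Poincar\'e inequality with $B=C_S\norm{\lambda^-}_{\nu/2}$ playing the role of $k$ from Theorem \ref{1.3}, and for $1<p<2$ the same additive regularization (the paper multiplies by $\phi^2(h+\delta)^{p-2}$ where you propose $(\abs{\omega}^2+\epsilon)^{(p-2)/2}$) with a Young-parameter choice ($\eta=\tfrac{2-p}{2}$ in the paper) that yields exactly the threshold $\tfrac{4p}{(4-p)^2C_S}$ you predict. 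The argument is correct and matches the paper's, down to the concluding infinite-volume step.
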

Then by uisng Li's result, we have 
\begin{corollary}
    Let $M^{2n}$ be a $2n$-dimensional complete K\"{a}hler manifold satisfying the following Sobolev type inequality ($\nu>2$)
        $$\Biggl( \int_M \phi^{\frac{2\nu}{\nu-2}} \Biggl)^{\frac{\nu-2}{\nu}} \leq C_S\int_M \abs{\nabla \phi}^2,$$
    where $\phi$ is any compactly supported smooth function on $M$ and $C_S$ is a positive constant.  Suppose that $||\lambda^-||_{\nu/2} < \frac{2}{C_S}$, where
        $$\norm{\lambda^-}_{\nu/2} := \Bigl(\int_M \bigl(\lambda^{-}\bigl)^{\frac{\nu}{2}}\Bigl)^{\frac{2}{\nu}}.$$
    Then every harmonic function $u$ on $M$ with $\int_M\abs{\nabla u}^2<\infty$ is a constant function.
\end{corollary}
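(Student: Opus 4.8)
The plan is to obtain this corollary as the $p=2$ specialization of Theorem \ref{1.5}, the only apparent gap being that here $u$ is assumed merely harmonic rather than pluriharmonic. First I would invoke the regularity result of Li \cite{[Li]}: on a complete K\"ahler manifold, any harmonic function $u$ with finite Dirichlet energy $\int_M\abs{\nabla u}^2<\infty$ is automatically pluriharmonic. This is the decisive step, since it upgrades $u$ into precisely the class of functions governed by Theorem \ref{1.5}. The hypothesis $\int_M\abs{\nabla u}^2<\infty$ of the corollary is exactly the finite-Dirichlet-energy condition that Li's theorem requires, so this passage is immediate once the result is cited.

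Next I would check that the remaining hypotheses line up with the $p=2$ instance of Theorem \ref{1.5}. The Sobolev type inequality is stated identically in both places, with the same constant $C_S$ and the same parameter $\nu>2$, so nothing needs to be reconciled there. For the curvature integrability condition, Theorem \ref{1.5} in the range $p\geq2$ demands $\norm{\lambda^-}_{\nu/2}<\frac{4}{pC_S}$; evaluating at $p=2$ gives the threshold $\frac{4}{2C_S}=\frac{2}{C_S}$, which is exactly the standing assumption $\norm{\lambda^-}_{\nu/2}<\frac{2}{C_S}$ of the corollary. Thus every quantitative hypothesis of Theorem \ref{1.5} with $p=2$ is met.

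With $u$ now known to be pluriharmonic, satisfying $\int_M\abs{\nabla u}^2<\infty$, and with the curvature bound matching the $p=2$ threshold, I would apply Theorem \ref{1.5} directly to conclude that $u$ is constant. No further estimates are required: the refined Kato inequality, the B\"ochner formula, and the cutoff argument are all already carried out in the proof of Theorem \ref{1.5}, and the corollary simply feeds that machinery a harmonic function that has been recognized as pluriharmonic. Consequently the only conceptual content beyond Theorem \ref{1.5} is the harmonic-to-pluriharmonic upgrade supplied by Li's theorem, and that single invocation is also the sole place where any real work (as opposed to bookkeeping of constants) occurs.
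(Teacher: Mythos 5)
Your proposal is correct and follows exactly the paper's route: the paper likewise obtains this corollary by invoking Li's theorem (\cite{[Li]}) to upgrade the finite-Dirichlet-energy harmonic function to a pluriharmonic one, and then applying Theorem \ref{1.5} with $p=2$, where the threshold $\frac{4}{pC_S}$ becomes $\frac{2}{C_S}$. Nothing is missing; your verification of the constants matches the paper's implicit bookkeeping.
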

Finally, we give a proof for the following vanishing theorem on complete K\"{a}hler manifolds satisfying a Sobolev type inequality, assuming that its K\"{a}hler curvature tensor is bounded from below.
\begin{theorem} \label{1.7}
    Let $(M,g)$ be a complete $2n$-dimensional complete K\"{a}hler manifold satisfying the following  Sobolev type inequality ($\nu>2$)
    $$\Biggl( \int_M \phi^{\frac{2\nu}{\nu-2}} \Biggl)^{\frac{\nu-2}{\nu}} \leq C_S\int_M \abs{\nabla \phi}^2.$$
    Denote by $\mu_1\leq\ldots\leq\mu_{n^2}$ the eigenvalues of the K\"{a}hler curvature operator of $(M,g)$. Suppose that there exists a function $\kappa:M\to\R$ such that $\kappa\leq0$ and
    $$\frac{\mu_1(x)+\cdots+\mu_n(x)}{n+1} \geq \kappa(x)$$
    at every point $x\in M$. Then every pluriharmonic function $u$ on $M$ is constant, provided that $\abs{\nabla u}\in L^p\ (p>1)$,
    $\norm{\kappa}_{\nu/2} < \frac{4}{pC_S(n+1)}$
    if $p\geq2$ and
    $\norm{\kappa}_{\nu/2} <\frac{4p}{(4-p)^2C_S(n+1)}$
    if $1<p<2$, where
    $$\norm{\kappa}_{\nu/2} := \Bigl(\int_M \kappa^{\frac{\nu}{2}}\Bigl)^{\frac{2}{\nu}}.$$
\end{theorem}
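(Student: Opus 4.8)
The plan is to reduce Theorem \ref{1.7} to Theorem \ref{1.5} by converting the hypothesis on the K\"ahler curvature operator into a pointwise lower bound for the Ricci curvature. The starting observation is that a pluriharmonic function is in particular harmonic, so the curvature term appearing in the B\"ochner formula
$$\tfrac12\Delta\abs{\nabla u}^2=\abs{\nabla^2u}^2+\mathrm{Ric}(\nabla u,\nabla u)$$
is exactly the Ricci curvature in the direction of $\nabla u$; the pluriharmonicity only enters later, through the improved structure of the Hessian (its $(1,1)$-part vanishes, which yields the refined Kato inequality $\abs{\nabla^2u}^2\ge 2\abs{\nabla\abs{\nabla u}}^2$ already used inside the proof of Theorem \ref{1.5}). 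Consequently the entire analytic machinery---the test function $\abs{\nabla u}^{p-2}\eta^2$, integration by parts, the Sobolev inequality and H\"older's inequality against $\norm{\cdot}_{\nu/2}$, together with the dichotomy $p\ge2$ versus $1<p<2$ that produces the constants $\tfrac4p$ and $\tfrac{4p}{(4-p)^2}$---is contained in Theorem \ref{1.5}, and it suffices to feed it the correct curvature bound.

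The heart of the matter is therefore the following pointwise algebraic lemma: at each point and for every unit vector $\xi$,
$$\mathrm{Ric}(\xi,\bar\xi)\ \ge\ \mu_1(x)+\cdots+\mu_n(x),$$
the sum of the $n$ smallest eigenvalues of the K\"ahler curvature operator. I would prove this by writing $\mathrm{Ric}(\xi,\bar\xi)=\sum_{k=1}^n R(\xi,\bar\xi,e_k,\bar e_k)$ for a unitary frame with $e_1=\xi$, expressing each holomorphic bisectional curvature as the value of the curvature operator on a suitable real $(1,1)$-form built from $\xi$ and $e_k$, and arranging---via the first Bianchi identity---for these $n$ forms to be orthonormal, so that the sum becomes the trace of the curvature operator over an $n$-dimensional subspace; the Courant--Fischer/Ky Fan minimum principle then bounds this trace below by $\mu_1+\cdots+\mu_n$. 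The delicate, and genuinely obstructive, point is that the Bianchi symmetry is indispensable here: a naive application of Courant--Fischer to the coordinate forms $e^1\wedge\bar e^k$ produces the wrong (and in general complex) quantity, and without the symmetry of genuine K\"ahler curvature tensors the inequality is simply false, since one could place the most negative eigenvalue along the K\"ahler form. That the bound is sharp is visible on the product of a hyperbolic and a positively curved Riemann surface, where equality holds.

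Granting the lemma, the hypothesis $\frac{\mu_1+\cdots+\mu_n}{n+1}\ge\kappa$ gives $\mathrm{Ric}(\xi,\bar\xi)\ge(n+1)\kappa(x)$ for every unit $\xi$, so the smallest eigenvalue $\lambda(x)$ of the Ricci tensor satisfies $\lambda(x)\ge(n+1)\kappa(x)$. Since $\kappa\le0$, this forces $\lambda^-(x)=\min\{0,\lambda(x)\}\ge(n+1)\kappa(x)$, whence $\abs{\lambda^-}\le(n+1)\abs{\kappa}$ pointwise and $\norm{\lambda^-}_{\nu/2}\le(n+1)\norm{\kappa}_{\nu/2}$. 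The assumed bounds on $\norm{\kappa}_{\nu/2}$ then translate exactly into $\norm{\lambda^-}_{\nu/2}<\frac4{pC_S}$ when $p\ge2$ and $\norm{\lambda^-}_{\nu/2}<\frac{4p}{(4-p)^2C_S}$ when $1<p<2$, which are precisely the hypotheses of Theorem \ref{1.5}; applying that theorem shows $u$ is constant. I expect the only real obstacle to be the curvature-operator lemma of the second paragraph. Alternatively, one can avoid the reduction and rerun the B\"ochner--Sobolev argument of Theorem \ref{1.5} verbatim, inserting the bound $\mathrm{Ric}(\nabla u,\nabla u)\ge(n+1)\kappa\abs{\nabla u}^2$ directly at the Weitzenb\"ock step.
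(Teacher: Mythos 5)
Your proposal is correct, but it follows a genuinely different route from the paper's. The paper does not reduce Theorem \ref{1.7} to Theorem \ref{1.5}: it sets $\omega=du$, decomposes it into $(1,0)$ and $(0,1)$ parts, invokes Lemma \ref{4.2} (imported from \cite{[DungCho23]}, after Petersen--Wink) to obtain the Bochner inequality $\tfrac12\Delta\abs{\omega}^2\geq\abs{\nabla\omega}^2+(n+1)\kappa\abs{\omega}^2$, combines this with the refined Kato inequality $\abs{\nabla\omega}^2\geq2\abs{\nabla\abs{\omega}}^2$ coming from pluriharmonicity, and then reruns the entire cut-off/Sobolev/H\"older/Young iteration in both regimes $p\geq2$ and $1<p<2$ --- the same analytic skeleton already used for Theorem \ref{1.5}. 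Your reduction short-circuits all of that: the pointwise bound $\mathrm{Ric}\geq\mu_1+\cdots+\mu_n\geq(n+1)\kappa$ gives $\lambda^-\geq(n+1)\kappa$, hence $\norm{\lambda^-}_{\nu/2}\leq(n+1)\norm{\kappa}_{\nu/2}$, which converts the hypotheses of Theorem \ref{1.7} into exactly those of Theorem \ref{1.5}, and no analytic work is repeated. Your key algebraic lemma is true, and it is precisely the content hidden inside Lemma \ref{4.2}: for $1$-forms the Weitzenb\"ock curvature term \emph{is} the Ricci tensor, the K\"ahler curvature operator annihilates the orthogonal complement of $\mathfrak{u}(n)$ in $\Lambda^2$, and the resulting expression of $\mathrm{Ric}(\xi,\bar\xi)$ through diagonal values of the curvature operator (after the K\"ahler--Bianchi symmetry $R_{1\bar{1}k\bar{k}}=R_{1\bar{k}k\bar{1}}$) is bounded below by $\mu_1+\cdots+\mu_n$ via the Ky Fan minimum principle; your sharpness check on a product of a hyperbolic and a positively curved surface is also correct. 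Be aware, though, that this lemma is exactly the step the paper outsources to the literature, so writing it out carefully is the price of your route. What your route buys: brevity, and it sidesteps a point the paper glosses over --- Lemma \ref{4.2} is stated only for $(1,0)$-forms, and the passage from the estimates for $\omega_1$ and $\omega_2$ separately to the inequality for the real form $\omega=\omega_1+\omega_2$ is not spelled out, whereas in your argument this never arises because everything is channelled through the real Ricci tensor. What the paper's route buys: it stays within the Petersen--Wink framework, which is what generalizes to higher-degree $(p,q)$-forms, where the Weitzenb\"ock curvature term is no longer the Ricci curvature and no reduction to a Ricci lower bound is available.
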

By using Li's  result we have
\begin{corollary}
  Let $(M,g)$ be a complete $2n$-dimensional complete K\"{a}hler manifold satisfying the following  Sobolev type inequality ($\nu>2$)
    $$\Biggl( \int_M \phi^{\frac{2\nu}{\nu-2}} \Biggl)^{\frac{\nu-2}{\nu}} \leq C_S\int_M \abs{\nabla \phi}^2.$$
    Denote by $\mu_1\leq\ldots\leq\mu_{n^2}$ the eigenvalues of the K\"{a}hler curvature operator of $(M,g)$. Suppose that there exists a function $\kappa:M\to\R$ such that $\kappa\leq0$ and
    $$\frac{\mu_1(x)+\cdots+\mu_n(x)}{n+1} \geq \kappa(x)$$
    at every point $x\in M$. Then every harmonic function $u$ on $M$ is constant, provided that $\abs{\nabla u}\in L^2$ and
    $\norm{\kappa}_{\nu/2} < \frac{2}{C_S(n+1)},$
    where
    $$\norm{\kappa}_{\nu/2} := \Bigl(\int_M \kappa^{\frac{\nu}{2}}\Bigl)^{\frac{2}{\nu}}.$$
\end{corollary}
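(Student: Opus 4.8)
The plan is to derive this corollary as an immediate consequence of Theorem \ref{1.7} combined with Li's regularity result cited in the paper, specializing to the exponent $p=2$. The only work is to verify that the hypotheses of Theorem \ref{1.7} are met and that the numerical threshold in the corollary matches the $p=2$ case of the theorem's threshold.

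First I would take $u$ to be a harmonic function on $M$ with finite Dirichlet energy, i.e. $|\nabla u| \in L^2$, equivalently $\int_M |\nabla u|^2 < \infty$. By Li's result (\cite{[Li]}), which states that a harmonic function on a complete K\"ahler manifold with finite Dirichlet energy is automatically pluriharmonic, the function $u$ is in fact pluriharmonic. This is precisely the step that upgrades the harmonicity hypothesis to the pluriharmonicity hypothesis required by Theorem \ref{1.7}, and it is the same device already used to pass from Theorem \ref{1.3} to its corollary and from Theorem \ref{1.5} to its corollary.

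Next I would apply Theorem \ref{1.7} with $p=2$. Since $p=2 > 1$ and in fact $p \geq 2$, the relevant threshold in Theorem \ref{1.7} is $\norm{\kappa}_{\nu/2} < \frac{4}{p\,C_S(n+1)}$. Substituting $p=2$ gives $\frac{4}{2\,C_S(n+1)} = \frac{2}{C_S(n+1)}$, which is exactly the hypothesis $\norm{\kappa}_{\nu/2} < \frac{2}{C_S(n+1)}$ assumed in the corollary. All other hypotheses of Theorem \ref{1.7} (the Sobolev inequality with the same constant $C_S$ and exponent $\nu$, the existence of $\kappa \leq 0$ with $\frac{\mu_1(x)+\cdots+\mu_n(x)}{n+1} \geq \kappa(x)$, and the definition of $\norm{\kappa}_{\nu/2}$) are carried over verbatim. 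Hence Theorem \ref{1.7} applies to the pluriharmonic function $u$ and forces $u$ to be constant.

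There is essentially no genuine obstacle here: the corollary is a direct specialization. The only point that requires care is the arithmetic matching of the constants, namely confirming that setting $p=2$ in the $p\geq 2$ branch $\frac{4}{pC_S(n+1)}$ yields precisely $\frac{2}{C_S(n+1)}$, and confirming that Li's theorem indeed guarantees pluriharmonicity under the finite $L^2$ Dirichlet energy assumption $\int_M |\nabla u|^2 < \infty$ on a complete K\"ahler manifold. Once these are noted, the conclusion follows immediately from Theorem \ref{1.7}.
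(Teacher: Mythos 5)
Your proposal is correct and is exactly the paper's intended argument: the paper states this corollary with the single phrase ``By using Li's result we have,'' meaning precisely the two steps you give --- invoke Li's theorem to upgrade the harmonic function with finite Dirichlet energy to a pluriharmonic one, then apply Theorem \ref{1.7} with $p=2$, where the threshold $\frac{4}{pC_S(n+1)}$ becomes $\frac{2}{C_S(n+1)}$.
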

We note that the condition on K\"{a}hler curvature operator in Theorem \ref{1.7} is a new method given by Petersen and Wink \cite{PW} in developing the well-known Bochner technique. The new curvature condition gives us a way to control the Ricci term in Bochner formula effectively. This also means our vanishing result is new, although the assumption on Sobolev type inequality is well-known. Moreover, we would like to emphasize that our vanishing results also hold for $L^p$ energy with $(1<p<2)$ while most of results in literatures in this direction only work for $p\geq2$ (see e.g. Theorem 1.1 (iii) in \cite{[Xia]}).

\section{Preliminaries}

In \cite{[Lam10]}, the author gave proofs of some Kato type inequalities for $L^2$ harmonic $1$-forms on Riemannian manifolds. Specifically, he proved that for every finite $L^2$ harmonic 1-forms $\alpha$, it is true that $d*(\alpha \wedge \Omega)=0$ where $\Omega$ is a parallel form on $M$. From there, we can deduce a refined Kato type inequality for any $L^2$ harmonic 1-form $\omega$ on a K\"{a}hler manifold $M$
$$\abs{\nabla\omega}^2 \geq 2\abs{\nabla\abs{\omega}}^2.$$
When $p>1$, a similar approach cannot be applied since an arbitrary $L^p$-harmonic 1-form $\omega$ no longer satisfies $d*(\omega \wedge \Omega)=0$. Thus, in order to generalize previous results, we had to find another route. In this section, for pluriharmonic functions we can deduce the following refined kato inequality by using Theorem 4.2 in \cite{[Lam10]}.
\begin{lemma} \label{lemma:2.2}
    Let $M^{2n}$ be a  $2n$-dimensional complete K\"{a}hler manifold. If $u$ is a pluriharmonic function on $M$, then
        $$\abs{\nabla^2 u}^2 \geq 2\abs{\nabla \abs{\nabla u}}^2.$$
\end{lemma}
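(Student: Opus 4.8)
The plan is to reduce the inequality to a pointwise statement of linear algebra about the Hessian $H:=\nabla^2 u$ at a fixed point $x\in M$, exploiting the rigid symmetry that pluriharmonicity imposes on $H$ relative to the complex structure $J$. First I would record the pointwise meaning of the hypothesis: writing $\partial\bar\partial u=0$ in real terms (for instance in holomorphic normal coordinates, where the equations $u_{i\bar j}=0$ become $u_{x^ix^j}=-u_{y^iy^j}$ and $u_{x^iy^j}=u_{x^jy^i}$), I claim pluriharmonicity is equivalent to the Hessian being anti-invariant under $J$, namely $H(JX,JY)=-H(X,Y)$ for all $X,Y$. Viewing $H$ as a symmetric endomorphism via the metric, and using that $J$ is orthogonal with $J^{*}=-J$, this is exactly the statement that $H$ anti-commutes with $J$, i.e. $HJ+JH=0$. (This structural fact can also be extracted from the computation behind Theorem 4.2 in \cite{[Lam10]}.)

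The key consequence is spectral. Since $H$ anti-commutes with the orthogonal skew-adjoint operator $J$, whenever $v$ is a unit eigenvector of $H$ with eigenvalue $\lambda$, the vector $Jv$ is a unit eigenvector with eigenvalue $-\lambda$, and $\innerproduct{v}{Jv}=0$. Hence the eigenvalues of $H$ occur in pairs $\pm\lambda_j$, and there is an orthonormal basis $\{v_1,Jv_1,\ldots,v_n,Jv_n\}$ of $T_xM$ in which $H$ has eigenvalues $\{\lambda_1,-\lambda_1,\ldots,\lambda_n,-\lambda_n\}$ (degeneracies are handled by choosing an orthonormal eigenbasis compatible with the $J$-pairing, which is possible because $J$ preserves each $\{\pm\lambda\}$-eigenspace sum). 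In particular
$$\abs{\nabla^2 u}^2=\sum_{j=1}^{n}\bigl(\lambda_j^2+\lambda_j^2\bigr)=2\sum_{j=1}^{n}\lambda_j^2.$$

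Next I would express the right-hand side through $H$. On the open set where $\nabla u\neq 0$, differentiating $\abs{\nabla u}^2=\innerproduct{\nabla u}{\nabla u}$ gives $\nabla_k\abs{\nabla u}=\abs{\nabla u}^{-1}H(\nabla u,e_k)$, so that $\abs{\nabla\abs{\nabla u}}^2=\abs{He}^2$ with $e:=\nabla u/\abs{\nabla u}$ a unit vector. Expanding $e=\sum_j(a_jv_j+b_jJv_j)$ with $\sum_j(a_j^2+b_j^2)=1$, and using $He=\sum_j\lambda_j(a_jv_j-b_jJv_j)$, I get $\abs{He}^2=\sum_j\lambda_j^2(a_j^2+b_j^2)$. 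Since $a_j^2+b_j^2\le 1$ for each $j$,
$$\abs{\nabla\abs{\nabla u}}^2=\sum_{j=1}^{n}\lambda_j^2(a_j^2+b_j^2)\le\sum_{j=1}^{n}\lambda_j^2=\tfrac12\abs{\nabla^2 u}^2,$$
which is exactly the asserted inequality (and the example $u=\mathrm{Re}(z^2)$ shows the constant $2$ is sharp).

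The main obstacle is the step establishing the anti-commuting, symmetric-spectrum property of $H$; once that is in hand everything reduces to the chain rule and elementary linear algebra. I would also be mildly careful about the locus $\{\nabla u=0\}$, where $\abs{\nabla u}$ need not be smooth, but this set is negligible and the estimate above is purely pointwise wherever the gradient is nonzero, so the refined Kato inequality holds in the a.e./weak sense in which it is used.
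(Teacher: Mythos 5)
Your proof is correct, but it takes a genuinely different route from the paper's. The paper gives no self-contained argument for this lemma: it obtains the inequality by following the proof of Theorem 4.2 in \cite{[Lam10]}, i.e.\ it treats $\omega=du$ as a harmonic $1$-form whose covariant derivative enjoys the ``K\"ahler-harmonic'' symmetries (which Lam extracts from the condition $d*(\wedgeproduct{\omega}{\Omega})=0$ for the parallel K\"ahler form $\Omega$, and which hold automatically when $u$ is pluriharmonic), and then runs Lam's orthonormal-frame computation to get the constant $2$. You instead reduce everything to pointwise linear algebra on the Hessian: pluriharmonicity is equivalent to $HJ+JH=0$ for $H=\nabla^2u$, so the spectrum pairs as $\pm\lambda_j$ with $J$ exchanging eigenspaces, giving $\abs{\nabla^2u}^2=2\sum_j\lambda_j^2$, while $\abs{\nabla\abs{\nabla u}}^2=\abs{He}^2\le\sum_j\lambda_j^2$; your handling of degenerate eigenvalues, of the chain-rule identity $\nabla\abs{\nabla u}=H(e,\cdot)$, and of the critical set $\{\nabla u=0\}$ (a.e.\ validity is exactly the sense in which the paper uses the inequality inside its integral estimates) are all sound. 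Your version is more elementary and self-contained, and the eigenvalue pairing makes the sharpness of the constant transparent; the paper's route through Lam's formalism is the one that transfers to $1$-forms on quaternionic K\"ahler manifolds (Corollary \ref{Corollary:2.6}), where there is no Hessian, the required symmetry must first be established by an integral estimate under a volume-growth hypothesis, and the constant drops to $4/3$. Incidentally, you can skip the spectral decomposition entirely: anti-commutation gives $H(Je)=-J(He)$, hence $\abs{H(Je)}=\abs{He}$, and summing the squared entries of the two rows of $H$ in the orthonormal directions $e$ and $Je$ already yields $\abs{\nabla^2u}^2\ge\abs{He}^2+\abs{H(Je)}^2=2\abs{He}^2$.
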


When it comes to quaternionic K\"{a}hler manifold, the situation gets a bit more complicated due to the quaternionic structure of $M$. In the wake of lucidity, we shall recall basic properties of quaternionic K\"{a}hler manifolds which will be necessary in the sequel.

\begin{definition}[\cite{[KLZ08]}]
    A quaternionic K\"{a}hler manifold $(M,g)$ is a Riemannian manifold with a rank 3 vector bundle $V \subset End(TM)$ satisfying
    \begin{itemize}
        \item In any coordinate neighborhood $U$ of $M$, there exists a local basis $\{ I,J,K \}$ of $V$ such that
        $$\begin{aligned}
            &I^2=J^2=K^2=-1 \\
            &IJ=-JI=K \\
            & JK=-KJ=I\\
            &KI=-IK=J
        \end{aligned}$$
        and
        $$\innerproduct{IX}{IY}=\innerproduct{JX}{JY}=\innerproduct{KX}{KY}=\innerproduct{X}{Y}$$
        for all $X,Y\in TM$.

        \item If $\phi \in \Gamma(V)$ then $\nabla_{X}\phi \in\Gamma(V)$ for all $X\in TM$.
    \end{itemize}
\end{definition}

\begin{definition}[\cite{[KLZ08]}]
    Let $(M,g)$ be a quaternionic K\"{a}hler manifold. We can define a 4-form $\Omega$ on $M$ by
    $$\Omega = \wedgeproduct{\omega_1}{\omega_1} + \wedgeproduct{\omega_2}{\omega_2} + \wedgeproduct{\omega_3}{\omega_3}$$
    where
        $$\begin{aligned}
            &\omega_1 = \innerproduct{\cdot}{I\cdot}\\
            &\omega_2 = \innerproduct{\cdot}{J\cdot}\\
            &\omega_3 = \innerproduct{\cdot}{K\cdot}.
        \end{aligned}$$
\end{definition}
In a joint work by Kong, Li and Zhou (see \cite{[KLZ08]}), the authors gave a detailed proof of the fact that the above 4-form determined by the quaternionic structure is indeed parallel, which is immensely useful for the following lemma.
\begin{lemma}
    Let $(M,g)$ be a complete quaternionic K\"{a}hler manifold and $\Omega$ be the parallel 4-form determined by the quaternionic structure. Assume that $M$ satisfies a polynomial volume growth condition, i.e,
        $$Vol(B_o(r))\leq Cr^m, m\geq1.$$
    for sufficiently large $r$ and some fixed real numbers $m, C$. Let $p>1$ satisfy
        $$2<p\leq\frac{2m}{m-1}
 \text{ if $m>1$, and }1<p<\infty \text{ if }m=1.$$
    Then $d*(\wedgeproduct{\omega}{\Omega}) = 0$ for every $L^p$ harmonic $1$-form $\omega$.
\end{lemma}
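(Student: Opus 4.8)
The plan is to run the B\"ochner technique on the $5$-form $\omega\wedge\Omega$: reproduce the $L^2$ mechanism behind Theorem 4.2 of \cite{[Lam10]} and then insert a cut-off whose error is absorbed by the polynomial volume growth. First I would record the structural facts. Since $\Omega$ is parallel, $d\Omega=0$ and $\delta\Omega=0$; since $\omega$ is harmonic, $d\omega=0$ and $\delta\omega=0$. Hence $\omega\wedge\Omega$ is closed, and it lies in $L^p$ because $\abs{\Omega}$ is a constant (as $\nabla\Omega=0$) and $\abs{\omega}\in L^p$. By the Hodge star, the assertion $d*(\omega\wedge\Omega)=0$ is equivalent to the co-closedness $\delta(\omega\wedge\Omega)=0$, so the whole statement reduces to showing that the smooth $4$-form $\Theta:=\delta(\omega\wedge\Omega)$ vanishes identically. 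Expanding $\delta=-\sum_i\iota_{e_i}\nabla_{e_i}$ in a local orthonormal frame and using $\nabla\Omega=0$ together with $\delta\omega=0$ gives the first-order expression $\Theta=\sum_i(\nabla_{e_i}\omega)\wedge\iota_{e_i}\Omega$, a pointwise contraction of $\nabla\omega$ against the parallel form; note that, in contrast with the $L^2$ situation, $\Theta$ need not vanish for a merely $L^p$ form, which is exactly why the extra hypotheses are needed.

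To prove $\int_M\abs{\Theta}^2=0$ I would start from the identity $\int_M\phi^2\abs{\Theta}^2=\int_M\innerproduct{\omega\wedge\Omega}{d(\phi^2\Theta)}$, valid by integration by parts for every compactly supported cut-off $\phi$. Splitting $d(\phi^2\Theta)=\phi^2\,d\Theta+2\phi\,d\phi\wedge\Theta$, the interior piece $\int_M\phi^2\innerproduct{\omega\wedge\Omega}{d\Theta}$ is the one governed by the Weitzenb\"ock formula for $\omega\wedge\Omega$: here the rough Laplacian commutes with $\wedge\,\Omega$ because $\nabla\Omega=0$, and the curvature contributions are organised exactly as in the harmonic $1$-form computation of \cite{[Lam10]} (using $\Delta\omega=0$ and the fact that curvature annihilates the parallel $\Omega$). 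This interior contribution is therefore the part already present in the $L^2$ treatment and carries over; what is genuinely new is the single boundary-type integral $\mathrm{err}(\phi):=2\int_M\phi\,\innerproduct{\iota_{\nabla\phi}(\omega\wedge\Omega)}{\Theta}$, which is supported on the region where $\nabla\phi\neq0$.

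The crux, and the only place where the hypotheses on $m$ and $p$ enter, is to show that $\mathrm{err}(\phi_R)\to0$ along cut-offs $\phi_R$ with $\phi_R\equiv1$ on $B_o(R)$, $\operatorname{supp}\phi_R\subset B_o(2R)$ and $\abs{\nabla\phi_R}\le C/R$, so that the error lives on the annulus $A_R:=B_o(2R)\setminus B_o(R)$. There I would estimate $\abs{\iota_{\nabla\phi_R}(\omega\wedge\Omega)}\le (C/R)\abs{\omega}$ and then use the Cauchy--Schwarz and H\"older inequalities to bound $\mathrm{err}(\phi_R)$ by a product of the tail $\int_{A_R}\abs{\omega}^p$ of the convergent integral $\int_M\abs{\omega}^p$ with a power of $\operatorname{Vol}(B_o(2R))\le C R^m$ and a negative power of $R$ coming from $\abs{\nabla\phi_R}$. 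The point is that the exponents match: the range $2<p\le 2m/(m-1)$ (and $1<p<\infty$ when $m=1$) is precisely the one for which the accumulated power of $R$ is non-positive, so that the vanishing tail forces $\mathrm{err}(\phi_{R_j})\to0$ along some $R_j\to\infty$. Letting $R_j\to\infty$ then gives $\int_M\abs{\Theta}^2=0$, hence $\Theta=0$ and $d*(\omega\wedge\Omega)=0$.

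I expect the delicate point to be exactly this exponent book-keeping. For $p=2$ the cut-off error vanishes on any complete manifold by completeness alone --- this is Lam's setting and explains why the volume-growth hypothesis is then unnecessary --- whereas for $p>2$ one must trade the integrability of $\abs{\omega}^p$ against the volume factor, and it is this trade-off that produces the threshold $2m/(m-1)$ and the slight loss in the admissible range noted after Theorem \ref{1.4}. A further subtlety, inherited from the quaternionic structure, is that $\Theta$ is controlled only through $\nabla\omega$ rather than through $\omega$ itself, so the Cauchy--Schwarz step on the annulus has to be coupled with the absorption of $\int_M\phi^2\abs{\Theta}^2$ produced by the interior identity, rather than estimating $\Theta$ in isolation.
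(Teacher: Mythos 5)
Your reduction of the statement to $\Theta:=\delta(\wedgeproduct{\omega}{\Omega})=0$ and your treatment of the cut-off error are reasonable in outline, but the central step of your scheme has a genuine gap: the interior term. You write
\begin{equation*}
\int_M\phi^2\abs{\Theta}^2=\int_M\phi^2\innerproduct{\wedgeproduct{\omega}{\Omega}}{d\Theta}+\mathrm{err}(\phi),
\end{equation*}
and claim the first summand is ``governed by the Weitzenb\"ock formula'' with curvature contributions that ``carry over'' from the $L^2$ computation. This is not justified. Since $\wedgeproduct{\omega}{\Omega}$ is closed, $d\Theta=\Delta_H(\wedgeproduct{\omega}{\Omega})$, and the Weitzenb\"ock formula in degree $5$ gives
\begin{equation*}
d\Theta=(\nabla^*\nabla\omega)\wedge\Omega+\mathcal{R}\bigl(\wedgeproduct{\omega}{\Omega}\bigr)=-\wedgeproduct{Ric(\omega)}{\Omega}+\mathcal{R}\bigl(\wedgeproduct{\omega}{\Omega}\bigr),
\end{equation*}
where $\mathcal{R}$ is the Weitzenb\"ock curvature operator on $5$-forms. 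Parallelism of $\Omega$ does \emph{not} make $\mathcal{R}(\wedgeproduct{\omega}{\Omega})$ vanish ($\mathcal{R}$ does not act factor-by-factor), and the Ricci term has no favorable sign on the manifolds at stake (they are Einstein with negative constant, e.g.\ $Ric_M=-4(n+2)g$ in Theorem \ref{1.4}). Proving that these two curvature terms cancel would require a quaternionic analogue of the K\"ahler commutation identities for $\Omega\wedge\cdot$, a nontrivial structural fact that does not follow from $\nabla\Omega=0$ and is nowhere established in your argument. The only other way to evaluate the interior term, integrating by parts once more, is circular: $\int_M\phi^2\innerproduct{\wedgeproduct{\omega}{\Omega}}{d\Theta}=\int_M\phi^2\abs{\Theta}^2-\mathrm{err}(\phi)$, which returns your starting identity verbatim. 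So nothing in your scheme actually controls the interior term, and controlling it is essentially equivalent to the statement being proved.

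The idea you are missing is the paper's identity \eqref{2.1}, $*\,d*(\wedgeproduct{\omega}{\Omega})=(-1)^{4n-1}d*(\wedgeproduct{\omega}{*\Omega})$, valid because $\Omega$ is parallel and $\omega$ is harmonic; this is where the quaternionic structure really enters, and it is also the engine of the $L^2$ case in \cite{[KLZ08]} and \cite{[Lam10]}. Setting $T=d*(\wedgeproduct{\omega}{\Omega})$, it yields $\int_M\phi^2\abs{T}^2\le\bigl|\int_M\phi^2\,T\wedge d*(\wedgeproduct{\omega}{*\Omega})\bigr|$, and since $T$ is exact and $d^2=0$, integration by parts leaves \emph{only} the cut-off term $\int_M d\phi^2\wedge*(\wedgeproduct{\omega}{\Omega})\wedge d*(\wedgeproduct{\omega}{*\Omega})$: no interior term and no curvature at all. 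The paper then closes the loop with H\"older at the conjugate exponent $q=p/(p-1)$, comparing $\int\phi^q\abs{T}^q$ with $\int\phi^2\abs{T}^2$ via the volume growth and reabsorbing $(\int\phi^q\abs{T}^q)^{1/q}$; the resulting power $r^{m(2-q)-q}$ is non-positive exactly when $p\le\frac{2m}{m-1}$. Note finally that your own bookkeeping would not reproduce this threshold: Cauchy--Schwarz absorption of $\epsilon\int\phi^2\abs{\Theta}^2$ plus H\"older on the annulus produces the factor $R^{m(p-2)/p-2}$, i.e.\ the condition $p\le\frac{2m}{m-2}$ for $m>2$, so the claimed ``precise match'' of exponents with the hypothesis was asserted rather than derived.
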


\begin{proof}
    First, we note that
    \begin{equation}
        *d*(\wedgeproduct{\omega}{\Omega})=(-1)^{4n-1}d*(\wedgeproduct{\omega}{*\Omega}). \label{2.1}
    \end{equation}
    The proof for this identity  can be found in \cite{[KLZ08]} and \cite{[Lam10]}. Now let $\phi$ be a cut-off function such that $\phi=0$ in $B_o(r)$ (a geodesic ball of radius $r$ centered at some fixed point $o\in M$), $\phi=0$ outside $B_o(2r)$ and $\abs{\nabla \phi}\leq \frac{C_1}{r}$ for some positive constant $C_1$. Let $q\in\R$ such that
    $$\frac{1}{p}+\frac{1}{q}=1,$$
    then
    $$\frac{2m}{m+1}\leq q < 2 \ {\rm if} \ m>1,\ {\rm and} \ 1<q \ {\rm if} \ m=1.$$
    By H\"{o}lder's inequality, we have that
    $$\Biggl( \int_M\phi^q\abs{d*(\wedgeproduct{\omega}{\Omega})}^q \Biggl)^{\frac{2}{q}} \leq (Vol(B_o(2r)))^{\frac{2}{q}-1}\Biggl(\int_M \phi^2\abs{d*(\wedgeproduct{\omega}{\Omega})}^2 \Biggl).$$
    Thanks to the volume growth condition of $M$, this implies
        $$\Biggl( \int_M\phi^q\abs{d*(\wedgeproduct{\omega}{\Omega})}^q \Biggl)^{\frac{2}{q}} \leq C(2r)^{m(\frac{2}{q}-1)}\Biggl(\int_M \phi^2\abs{d*(\wedgeproduct{\omega}{\Omega})}^2 \Biggl).$$
    Now, we estimate the right hand side of the above inequality
        $$\begin{aligned}
            \int_M \phi^2\abs{d*(\wedgeproduct{\omega}{\Omega})}^2 &\leq \Biggl| \int_M \phi^2 d*(\wedgeproduct{\omega}{\Omega}) \wedge *d*(\wedgeproduct{\omega}{\Omega})\Biggl|\\
            & \leq \Biggl| \int_M \phi^2 d*(\wedgeproduct{\omega}{\Omega}) \wedge d*(\wedgeproduct{\omega}{*\Omega})\Biggl|\\
            & \leq \Biggl| \int_M d\phi^2 \wedge *(\wedgeproduct{\omega}{\Omega}) \wedge d*(\wedgeproduct{\omega}{*\Omega}) \Biggl|\\
            &\leq 2\Biggl( \int_M \abs{d\phi}^p \abs{*(\wedgeproduct{\omega}{\Omega})}^p\Biggl)^{\frac{1}{p}}\Biggl( \int_M \phi^q \abs{d*(\wedgeproduct{\omega}{*\Omega})}^q \Biggl)^{\frac{1}{q}}\\
            &= 2\Biggl( \int_M \abs{d\phi}^p \abs{*(\wedgeproduct{\omega}{\Omega})}^p\Biggl)^{\frac{1}{p}}\Biggl( \int_M \phi^q \abs{d*(\wedgeproduct{\omega}{\Omega})}^q \Biggl)^{\frac{1}{q}}
        \end{aligned}$$
    where the second and last equality follows from \eqref{2.1}, the third equality follows from integration by parts and the fact that $d^2=0$. Hence,
    \begin{equation}
        \Biggl( \int_M\phi^q\abs{d*(\wedgeproduct{\omega}{\Omega})}^q \Biggl)^{\frac{p}{q}} \leq C(2r)^{\frac{pm(2-q)}{q}}\Biggl(\int_M \abs{d\phi}^p\abs{*(\wedgeproduct{\omega}{\Omega})}^p \Biggl). \label{2.2}
    \end{equation}
    The fact that $\Omega$ is parallel implies
    $$\abs{*(\wedgeproduct{\omega}{\Omega})}\leq C_2\abs{\omega}$$
    for some constant $C_2$. Substituting the above into \eqref{2.2}, we obtain
        $$\int_{B_o(r)}\abs{d*(\wedgeproduct{\omega}{\Omega})}^q \leq C_3 r^{m(2-q)-q}\Bigl(\int_{B_o(2r)\setminus B_o(r)} \abs{\omega}^p\Bigl)^{\frac{q}{p}}.$$
    By letting $r\to+\infty$, the result follows from the assumption that $\abs{\omega}$ is $L^p$ integrable and $\frac{2m}{m+1}\leq q<2$ if $m>1$ and $1<q$ if $m=1.$
\end{proof}
A crucial consequence of the above result is that $\omega$ is quaternionic harmonic if $\omega$ is harmonic (see Lemma 3.1 in \cite{[KLZ08]}). From there, following the argument given in Theorem 4.2 in \cite{[Lam10]}, we can obtain the following refined Kato-type inequality.
\begin{corollary} \label{Corollary:2.6}
    Let $(M,g)$ be a complete quaternionic K\"{a}hler manifold and $\Omega$ be the parallel 4-form determined by the quaternionic structure. Assume that $M$ satisfies a polynomial volume growth condition, i.e,
        $$Vol(B_o(r))\leq Cr^m, m\geq1$$
    for sufficiently large $r$ and some fixed real numbers $m, C$. Let $p>1$ satisfy
        $$2<p\leq\frac{2m}{m-1}
 \text{ if $m>1$, and }1<p<\infty \text{ if }m=1.$$
    and let $\omega$ be an $L^p(p>1)$ harmonic 1-form. Then
        $$\abs{\nabla \omega}^2 \geq \frac{4}{3}\abs{\nabla \abs{\omega}}^2.$$
\end{corollary}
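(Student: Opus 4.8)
The plan is to feed the conclusion of the preceding lemma, namely $d*(\wedgeproduct{\omega}{\Omega})=0$, into the quaternionic Bochner machinery. By Lemma 3.1 in \cite{[KLZ08]} this identity forces the harmonic $1$-form $\omega$ to be \emph{quaternionic harmonic}, the quaternionic counterpart of the K\"ahler fact exploited in \cite{[Lam10]}. Since the asserted inequality is pointwise, I would fix a point $x$ with $\omega(x)\neq0$ (on the interior of the zero set of $\omega$ one has $\nabla\omega=0$, so both sides vanish and there is nothing to prove), argue by a local frame computation, and then extend everywhere by continuity. At $x$ I would choose an orthonormal frame adapted to the quaternionic structure: set $e_1=\omega/\abs{\omega}$ (identifying $1$-forms with vectors via the metric) and $e_2=Ie_1$, $e_3=Je_1$, $e_4=Ke_1$, which are orthonormal because $I,J,K$ are skew-adjoint isometries, and complete $\{e_1,e_2,e_3,e_4\}$ to a quaternionic-invariant frame $\{e_1,\dots,e_{4n}\}$.

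Writing $\omega_{i,j}=(\nabla_{e_j}\omega)(e_i)$, closedness $d\omega=0$ gives the symmetry $\omega_{i,j}=\omega_{j,i}$ and coclosedness $\delta\omega=0$ gives $\sum_i\omega_{i,i}=0$. With this normalization $\abs{\nabla\omega}^2=\sum_{i,j}\omega_{i,j}^2$, while a direct differentiation of $\abs{\omega}^2=\sum_i\omega_i^2$ shows $\partial_j\abs{\omega}=\omega_{1,j}$ at $x$, so $\abs{\nabla\abs{\omega}}^2=\sum_j\omega_{1,j}^2$.

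The crux is to convert quaternionic harmonicity into a \emph{refined} trace relation. Because $\Omega$ is parallel, writing out $d*(\wedgeproduct{\omega}{\Omega})=0$ (equivalently the identity \eqref{2.1}) differentiates only $\omega$, so it reduces to a pointwise linear relation obtained by contracting $\nabla\omega$ against $\Omega=\wedgeproduct{\omega_1}{\omega_1}+\wedgeproduct{\omega_2}{\omega_2}+\wedgeproduct{\omega_3}{\omega_3}$. Carrying out this contraction in the adapted frame, exactly as in Theorem 4.2 of \cite{[Lam10]} but with the fundamental $4$-form in place of the K\"ahler $2$-form, should yield the vanishing of the trace of $\nabla\omega$ over each quaternionic line, in particular
$$\omega_{1,1}+\omega_{2,2}+\omega_{3,3}+\omega_{4,4}=0.$$
By Cauchy--Schwarz this gives $\omega_{1,1}^2\leq 3(\omega_{2,2}^2+\omega_{3,3}^2+\omega_{4,4}^2)\leq 3\sum_{i,j\geq2}\omega_{i,j}^2$, hence $\sum_{i,j\geq2}\omega_{i,j}^2\geq\tfrac13\omega_{1,1}^2$. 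Combining this with $\omega_{1,i}=\omega_{i,1}$,
$$\abs{\nabla\omega}^2=\omega_{1,1}^2+2\sum_{i\geq2}\omega_{1,i}^2+\sum_{i,j\geq2}\omega_{i,j}^2\geq\tfrac43\omega_{1,1}^2+2\sum_{i\geq2}\omega_{1,i}^2\geq\tfrac43\Bigl(\omega_{1,1}^2+\sum_{i\geq2}\omega_{1,i}^2\Bigr)=\tfrac43\abs{\nabla\abs{\omega}}^2,$$
where the last inequality uses $2\geq\frac43$.

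I expect the middle step — extracting the quaternionic-line trace identity from $d*(\wedgeproduct{\omega}{\Omega})=0$ — to be the main obstacle. Unlike the K\"ahler case, the almost complex structures $I,J,K$ are only locally defined and individually \emph{not} parallel; only $\Omega$ (and the bundle $V$) is parallel. Consequently the bookkeeping in the contraction is genuinely heavier, and one must check that the connection terms rotating $I,J,K$ among themselves do not contribute at $x$. This is the same technical friction the authors flag after Theorem \ref{1.4}, and it is precisely why the quaternionic refined Kato constant is $\frac43$ (effective dimension $4$) rather than the generic $\frac{4n}{4n-1}$; once the trace relation is secured, the remaining estimate is the elementary algebra displayed above.
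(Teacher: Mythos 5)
Your proposal follows essentially the same route as the paper: the paper likewise feeds $d*(\omega\wedge\Omega)=0$ from the preceding lemma into Lemma 3.1 of \cite{[KLZ08]} to conclude that $\omega$ is quaternionic harmonic, and then invokes the argument of Theorem 4.2 in \cite{[Lam10]} — precisely the quaternionic-line trace relation $\omega_{1,1}+\omega_{2,2}+\omega_{3,3}+\omega_{4,4}=0$ and the Cauchy--Schwarz algebra you spell out — to obtain the constant $\frac{4}{3}$. Your write-up is in fact more explicit than the paper's, which leaves the adapted-frame computation entirely to the cited references.
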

Now we are ready to prove our main theorems.

\section{Vanishing results with a weighted Poincar\'{e} inequality}
In this section, we derive several vanishing theorems for pluriharmonic functions having finite $L^p$ energy on K\"{a}hler manifolds and $L^p$ harmonic 1-forms on quaternionic K\"{a}hler manifolds, assuming that their Ricci curvatures are bounded from below and they satisfy a weighted Poincar\'e inequality. Throughout the section, $\rho:M\to\R$ is assumed to be a nonnegative weight function. Note that complete manifolds satisfing the weighted Poincar\'e inequality has infinite volume.

\begin{proof}[Proof of Theorem \ref{1.3}] Let $u$ be a pluriharmonic function on $M$ that satisfies $$\int_M \abs{\nabla u}^p <+\infty\ (p>1).$$ Now set $h=\abs{\nabla u}$, we shall prove that $h$ satisfies the Bochner formula of the following form
    \begin{equation}
        \Delta h \geq  \frac{\abs{\nabla h}^2}{h} -\rho h.  \label{3.1}
    \end{equation}

    Indeed, by virtue of the Kato type inequality derived in Lemma \ref{lemma:2.2} and the classical Bochner formula, we have that
    $$\begin{aligned}
        \frac{1}{2}\Delta h^2 &= \abs{\nabla^2 u}^2 + \innerproduct{\nabla\Delta u}{\nabla u} + Ric(\nabla u,\nabla u)\\
        &\geq 2\abs{\nabla h}^2  - k\rho h^2,
    \end{aligned}$$
     which implies our desired version of the Bochner formula. Let $\phi $ be a cut-off function such that $\phi= 1$ in $B_o(r)$ (the geodesic ball of radius $r$ centered at some fixed point $o\in M$), $\phi= 0$ outside $B_o(2r)$ and $\abs{\nabla \phi} \leq \frac{d}{r}$ for some positive constant $d$.

    \textbf{Case $p\geq2$.}  Multiply both side of \eqref{3.1} with $\phi^2 h^{p-2}$ and integrate over $M$, we obtain
             $$\int_{M} \phi^2 h^{p-1}\Delta h \geq \int_{M}\phi^2 h^{p-2} \abs{\nabla h}^2 -k\int_{M}\rho \phi^2 h^p.$$
     By utilizing the integration by parts formula and reshuffling the terms, it follows that
     \begin{equation} 
         2\int_M \phi h^{p-1} \innerproduct{\nabla\phi}{\nabla h} + p\int_M \phi^2 h^{p-2}\abs{\nabla h}^2 \leq k\int_M \rho\phi^2 h^p. \label{3.2}
     \end{equation}
     Applying the Cauchy inequality to the first term of the LHS of the above equation gives us
         $$-2\int_{M} \phi h^{p-1} \innerproduct{\nabla \phi}{\nabla h} \leq \epsilon \int_{M} \phi^2 h^{p-2} \abs{\nabla h}^2 + \frac{1}{\epsilon}\int_{M} h^p \abs{\nabla \phi}^2,$$
         where $\epsilon$ is a positive constant.
     Next, by using the weighted Poincar\'{e} inequality and Young's inequality consecutively, the following evaluation holds
         $$\begin{aligned}
             \int_{M}\rho\phi^2 h^p &\leq \int_M \abs{\nabla(\phi h^{\frac{p}{2}})}^2\\
             &\leq \frac{(1+\epsilon)p^2}{4}\int_{M} \phi^2 h^{p-2}\abs{\nabla h}^2 + \Bigl(1 + \frac{1}{\epsilon} \Bigl) \int_{M} h^p \abs{\nabla \phi}^2.
         \end{aligned}$$
     Substituting the above computations into \eqref{3.2} resulted in
         $$\Bigl(p - \epsilon - \frac{k(1+\epsilon)p^2}{4} \Bigl)\int_{M} \phi^2 h^{p-2} \abs{\nabla h}^2 \leq \Bigl( \frac{1}{\epsilon} + k\Bigl( 1 + \frac{1}{\epsilon} \Bigl) \Bigl)\int_{M} h^p \abs{\nabla \phi}^2.$$
     Due to the fact that $k < \frac{4}{p}$, it is true that for sufficiently small positive $\epsilon$ we have
         $$C(p,\epsilon) = \frac{4\epsilon(p-\epsilon)-k(1+\epsilon)p^2}{4d(1+k(\epsilon+1))} >0.$$
     Therefore,
         $$\begin{aligned}
             C(p,\epsilon)\int_{M} \phi^2 h^{p-2}\abs{\nabla h}^2 &\leq \int_{M}\abs{\nabla\phi}^2 h^p \\
             &\leq \frac{d^2}{r^2}\int_{M} h^p.
         \end{aligned}$$
     By letting $r \to \infty$, it follows that $h$ is constant on $M$ due to its $L^p$ integrability. Moreover, considering that $\int_M h^p <+\infty$ and that $M$ has infinite volume, we can conclude that $h=0$, which implies that $u$ is a constant function.

     \textbf{Case $1<p<2$.} Assume that $\delta,\epsilon,\eta$ are positive constants. Multiply both side of \eqref{3.1} with $\phi^2 (h+\delta)^{p-2}$ and integrate over $M$, we obtain
             $$\int_{M} \phi^2 (h+\delta)^{p-2}h\Delta h \geq \int_{M}\phi^2 (h+\delta)^{p-2} \abs{\nabla h}^2 -k\int_{M}\rho \phi^2 (h+\delta)^{p-2}h^2.$$
     By utilizing the integration by parts formula and reshuffling the terms, it follows that
     \begin{equation} 
     \begin{aligned}
         2\int_M \phi (h+\delta)^{p-2}h \innerproduct{\nabla\phi}{\nabla h} + (p-2)\int_M \phi^2 (h+\delta)^{p-3}h\abs{\nabla h}^2
        \\ +2\int_M\phi^2 (h+\delta)^{p-2} \abs{\nabla h}^2 \leq k\int_M \rho\phi^2(h+\delta)^{p-2}h^2. \label{3.3}
         \end{aligned}
     \end{equation}
    Note that
    $$2\int_M \phi (h+\delta)^{p-2}h \innerproduct{\nabla\phi}{\nabla h}\leq\epsilon\int_M\phi^2 (h+\delta)^{p-2} \abs{\nabla h}^2+\frac{1}{\epsilon}\int_M \abs{\nabla\phi}^2(h+\delta)^{p-2}h^2,$$
    and
    $$\int_M\phi^2 (h+\delta)^{p-2} \abs{\nabla h}^2\geq\int_M \phi^2 (h+\delta)^{p-3}h\abs{\nabla h}^2,$$
    we obtain
    \begin{equation} 
     \begin{aligned}
      (p-\epsilon)&\int_M\phi^2 (h+\delta)^{p-2} \abs{\nabla h}^2 \\
      &\leq\frac{1}{\epsilon}\int_M \abs{\nabla\phi}^2(h+\delta)^{p-2}h^2+k\int_M \rho\phi^2(h+\delta)^{p-2}h^2.
     \label{3.4}
         \end{aligned}
     \end{equation}
     Next, by using the weighted Poincar\'{e} inequality and Young's inequality consecutively, the following evaluation holds  $$\begin{aligned}
             &\int_M \rho\phi^2(h+\delta)^{p-2}h^2 
             \\\leq& \int_M \abs{\nabla(\phi h(h+\delta)^{\frac{p-2}{2}})}^2\\
             =& \int_M\abs{\nabla\phi h(h+\delta)^\frac{p-2}{2}+\phi\nabla h(h+\delta)^\frac{p-2}{2}+\frac{p-2}{2}\phi h(h+\delta)^\frac{p-4}{2}\nabla h}^2
             \\\leq& \left(1+\frac{1}{\epsilon}\right)\int_M \abs{\nabla\phi}^2(h+\delta)^{p-2}h^2+(1+\epsilon)(1+\eta)\int_M\phi^2\abs{\nabla h}^2(h+\delta)^{p-2}
             \\&+(1+\epsilon)\left(1+\frac{1}{\eta}\right)\frac{(p-2)^2}{4}\int_M\phi^2(h+\delta)^{p-4}h^2\abs{\nabla h}^2
             \\ \leq&\left(1+\frac{1}{\epsilon}\right)\int_M \abs{\nabla\phi}^2(h+\delta)^{p-2}h^2+(1+\epsilon)\left[1+\eta+(1+\frac{1}{\eta})\frac{(p-2)^2}{4}\right]
             \\ &\times\int_M\phi^2\abs{\nabla h}^2(h+\delta)^{p-2}. \end{aligned}$$
         Therefore we get
      $$\begin{aligned}
\left\{p-\epsilon-k(1+\epsilon)\left[1+\eta+(1+\frac{1}{\eta})\frac{(p-2)^2}{4}\right]\right\}&\int_M\phi^2\abs{\nabla h}^2(h+\delta)^{p-2}
\\& \leq\left(1+\frac{2}{\epsilon}\right)\int_M \abs{\nabla\phi}^2(h+\delta)^{p-2}h^2
\\ &\leq\left(1+\frac{2}{\epsilon}\right)\int_M \abs{\nabla\phi}^2h^p.
      \end{aligned}$$
      Then if $k<\frac{p}{1+\eta+(1+\frac{1}{\eta})\frac{(p-2)^2}{4}}, $ we can choose $\epsilon$ small enough such that  the coefficient of the LHS is positive for any $\eta>0$. Let $\eta=\frac{2-p}{2}$, we get that when $k<\frac{4p}{(4-p)^2}$ we can choose small enough positive constant $\epsilon$ such that  the coefficient of the LHS is positive. By letting $r \to \infty$, it follows that $h$ is constant on $M$ due to its $L^p$ integrability. Moreover, considering that $\int_M h^p <+\infty$ and that $M$ has infinite volume, we can conclude that $h=0$, which implies that $u$ is a cosntant function.
\end{proof}

\begin{remark}
    By letting $p=2$ in the assumption, a previous result by Lam (see Theorem 4.2 in \cite{[Lam10]}) can be obtained immediately as a special case of the above theorem.
\end{remark}
Now we turn our attention to the space of $L^p$ harmonic 1-forms on the class of quaternionic K\"{a}hler manifolds.

\begin{proof}[Proof of Theorem \ref{1.4}] Let $\omega \in H^1(L^p(M))$ be arbitrary and set $h = \abs{\omega}$. Following the arguments given in the proof of Theorem \ref{1.3}, which utilize the original Bochner formula and Corollay \ref{Corollary:2.6}, it can be shown that
    $$\begin{aligned}
    \Delta h &\geq \frac{1}{3}\frac{\abs{\nabla h}^2}{h} -4(n+2) h\\
    &=\frac{1}{3}\frac{\abs{\nabla h}^2}{h} -\left(\frac{4(n+2)}{\lambda_1}\right)\lambda_1 h.
    \end{aligned}$$
    Let $k:=\frac{4(n+2)}{\lambda_1}$ and note that if $\lambda_1>0$ then by the variation principle for $\lambda_1$, we have the following Poincar\'{e} inequality
    $$\lambda_1\int_M\phi^2\leq\int_M|\nabla\phi|^2$$
    for any $\phi\in C^\infty_0(M)$.
    Applying similar argument as in the proof of Theorem \ref{1.3}, one can derive
        $$\Bigl( p - \frac{2}{3} -\frac{kp^2(1+\epsilon)}{4}-\epsilon\Bigl) \int_{M} \phi^2 h^{p-2} \abs{\nabla h}^2 \leq \Bigl( k\Bigl(1+\frac{1}{\epsilon}\Bigl) - \frac{1}{\epsilon}\Bigl) \int_{M} h^p \abs{\nabla \phi}^2$$
    where $\phi \in C_{0}^{\infty}(M)$ is a cut-off function such that $\phi= 1$ in $B_o(r)$ (a geodesic ball centered at some fixed point $o$ or radius $r$), $\phi = 0$ outside $B_o(2r)$ and $\abs{\nabla \phi} \leq \frac{d}{r}$ for some positve constant $d$. Due to the fact that $k=\frac{4(n+2)}{\lambda_1}<\bigl(4p-\frac{8}{3})/p^2$, the coefficients of the terms on both sides are positive for sufficiently small $\epsilon>0$. \\
    Now set
    $$C_Q(p,\epsilon) = \frac{p-\frac{2}{3}-k(1+\epsilon)\frac{p^2}{4}-\epsilon}{d\Bigl[ \frac{4p-8/3}{p^2}\Bigl(1+\frac{1}{\epsilon} \Bigl) -\frac{1}{\epsilon}\Bigl]}.$$
    The above inequality implies that
    $$C_Q(p,\epsilon)\int_{M} \phi^2 h^{p-2} \abs{\nabla h}^2 \leq \frac{d^2}{r^2}\int_{M}h^p.$$
    Since $\omega \in H^1(L^p(M))$, we have $\int_M h^p < +\infty$. Therefore, by letting $r\to+\infty$, the integral $\frac{1}{r^2}\int_{M} h^p$ vanishes. Thus, $h^{p-2}\abs{\nabla h}^2=0$, which implies that $h$ is constant on $M$. Now suppose that $h \neq 0$, we have $\int_M h^p = \infty$, since $M$ has infinite volume, which is a contradiction. Therefore, $h \equiv 0$ and the triviality of $H^1(L^p(M))$ is justified.
\end{proof}
\begin{remark}
    Notice that when $M^{4n}$ is a quaternionic K\"{a}hler manifold, it is a well-known fact that every harmonic form $\omega$ on $M$ satisfies the following Kato type inequality
       \begin{equation}\label{e3.3}
        \abs{\nabla \omega}^2 \geq \frac{4n+1}{4n} \abs{ \nabla \abs{\omega}}^2.      \end{equation}
    By following similar arguments given in the above results, we can also prove similar vanishing properties without the limitation of $p$. However, the Kato constant of \eqref{e3.3} depends on the dimension of $M$, and is less than $4/3$. Therefore, the lower bound of $\lambda_1$ now is also depend on the Kato constant and is weaker than the bound stated in Theorem \ref{1.4}. The detailed proof for this fact is left for interested readers.
\end{remark}

\section{Vanishing results  with a Sobolev type inequality}

In this section, we take a look at complete K\"ahler  manifolds satisfying a  Sobolev type inequality. The first vanishing theorem regarding with the lower bound of the Ricci curvature of K\"{a}hler manifolds, while the last theorem is a bit more involved as it requires a more general analogue of curvature on  K\"{a}hler manifold. Thus, to maintain clearness, we recall the needed definition and notation of algebraic curvature tensor on Euclidean vector spaces and later, on K\"{a}hler manifolds, in which case it is called the K\"{a}hler curvature operator. Our choice for notation is followed by that given in \cite{[DungCho23]} and \cite{[PW22]}.

Throughout the section, let us denote the smallest eigenvalue of the Ricci curvature tensor of $M$ at each point $x \in M$ by $\lambda(x)$ and $\lambda^{-}(x)=\min\{ 0,\lambda(x) \}$. Note that complete manifolds satisfying the Sobolev type inequality in our main theorems have infinite volume.

\begin{proof}[Proof of Theorem \ref{1.5}]
First, we shall prove that $u$ satisfies the B\"ochner formula of the following form
    \begin{equation}
        \Delta\abs{\nabla u} \geq \frac{\abs{\nabla\abs{u}}^2}{\abs{\nabla u}} + \abs{\nabla u}\lambda^{-}.\label{4.1}
    \end{equation}
    Indeed, by virtue of the refined Kato type inequality that we proved in Lemma \ref{lemma:2.2} and the classical B\"ochner formula, we have
    $$
        \begin{aligned}
            \frac{1}{2}\Delta\abs{\nabla u}^2 &= \abs{\nabla^2 u}^2 + \innerproduct{\nabla\Delta u}{\nabla u} + Ric(\nabla u,\nabla u)\\
            &\geq 2 \abs{\nabla\abs{\nabla u}}^2 + \abs{\nabla u}^2\lambda \\
            &\geq 2 \abs{\nabla\abs{\nabla u}}^2 + \abs{\nabla u}^2\lambda^{-}.
        \end{aligned}
    $$
    which implies our desired version of the Bochner formula.

    For simplicity, we denote by $h=\abs{\nabla u}$ in the following.
    Now let $\phi$ be a cut-off function on $M$ such that $\phi=1$ in $B_o(r)$ (the geodesic ball of radius $r$ centered at some fixed point $o\in M$), $\phi=0$ outside $B_o(2r)$ and $\abs{\nabla\phi} < \frac{d}{r}$ for some positive constant $d$.

    \textbf{Case $p\geq2$.} Multiply both side of \eqref{4.1} with $\phi^2h^{p-2}$ and integrate over $M$, we obtain
        $$\int_M \phi^2 h^{p-1} \Delta h \geq \int_M \phi^2 h^{p-2}\abs{\nabla h}^2 + \int_M \phi^2h^p\lambda^{-}.$$
    By the integration by parts formula and reshuffling the terms, it follows that
    \begin{equation}
        2\int_M \phi h^{p-1}\innerproduct{\nabla\phi}{\nabla h} + p\int_M \phi^2 h^{p-2} \abs{\nabla h}^2 \leq \int_M (-\lambda^{-})\phi^2 h^p.\label{4.2'}
    \end{equation}
    Applying the Cauchy inequality to the LHS of the above equation gives us
    $$-2\int_M \phi h^{p-1}\innerproduct{\nabla\phi}{\nabla h} \leq \epsilon\int_M \phi^2 h^{p-2} \abs{\nabla h}^2 + \frac{1}{\epsilon} \int_M h^p \abs{\nabla\phi}^2,$$
    where $\epsilon$ is a positive constant. Next, by using H\"{o}lder inequality, the weighted Sobolev inequality and Young's inequality consecutively on the RHS term, the following evaluation holds
    $$
        \begin{aligned}
            \int_M (-\lambda^{-})\phi^2 h^p &\leq \Biggl(\int_M \bigl( -\lambda^{-}\bigl)^\frac{\nu}{2}\Biggl)^\frac{2}{\nu} \Biggl(\int_M \Bigl(\phi h^{\frac{p}{2}}\Bigl)^{\frac{2\nu}{\nu-2}}\Biggl)^{\frac{\nu-2}{\nu}} \\
            & \leq C_S||\lambda^{-}||_{\nu/2} \int_M \Big| \nabla \Bigl(\phi h^{\frac{p}{2}}\Bigl) \Big|^2 \\
            & \leq C_S||\lambda^{-}||_{\nu/2} \Biggr[ \frac{p^2(1+\epsilon)}{4}\int_M \phi^2 h^{p-2}\abs{\nabla h}^2 + \Bigl(1+\frac{1}{\epsilon}\Bigl)\int_M h^p \abs{\nabla\phi}^2 \Biggr].
        \end{aligned}
    $$
    Substituting the above computations into \eqref{4.2'}, we obtain
    $$
        \Biggr[ p-\epsilon -\frac{C_S p^2(1+\epsilon)}{4}||\lambda^{-}||_{\nu/2} \Biggr] \int_M \phi^2 h^{p-2}\abs{\nabla h}^2 \leq \Biggr[ C_S\Bigl(1+\frac{1}{\epsilon}\Bigl)||\lambda^{-}||_{\nu/2} +\frac{1}{\epsilon} \Biggr]\int_M h^p \abs{\nabla\phi}^2.
    $$
    Due to the fact that $\norm{\lambda^-}_{\nu/2} < \frac{4}{pC_S}$, we can choose $\epsilon$ be a small enough potitive constant such that
        $$D(n,\epsilon) = \frac{p+\epsilon -\frac{C_S p^2(1+\epsilon)}{4}||\lambda^{-}||_{\nu/2}}{C_S\Bigl(1+\frac{1}{\epsilon}\Bigl)||\lambda^{-}||_{\nu/2} +\frac{1}{\epsilon}} >0 \quad .$$
    Hence
    $$D(n,\epsilon) \int_{B_o(r)} h^{p-2} \abs{\nabla h}^2 \leq \frac{d^2}{r^2}\int_{B_o(r)}h^p.$$
    By letting $r\to+\infty$, it holds true that $h=0$ on $M$ thanks to the finiteness of the $L^p$- energy of $h$ and that $M$ has infinite volume. Therefore $u$ is a contant function on $M$.

    \textbf{Case $1<p<2$.} Assume $\delta, \eta$ are positive constant in the following. Multiply both side of \eqref{4.1} with $\phi^2(h+\delta)^{p-2}$ and integrate over $M$, we obtain
        $$\int_M \phi^2 (h+\delta)^{p-2} h\Delta h \geq \int_M \phi^2 (h+\delta)^{p-2}\abs{\nabla h}^2 + \int_M \phi^2(h+\delta)^{p-2}h^2\lambda^{-}.$$
        By the integration by parts formula and reshuffling the terms, it follows that
    \begin{equation}
    \begin{aligned}
         &2\int_M \phi (h+\delta)^{p-2} h\innerproduct{\nabla\phi}{\nabla h} + (p-2)\int_M \phi^2 (h+\delta)^{p-3}h \abs{\nabla h}^2\\
         &+2\int_M\phi^2 (h+\delta)^{p-2}\abs{\nabla h}^2
         \leq \int_M (-\lambda^{-})\phi^2 (h+\delta)^{p-2}h^2.\label{4.3}
    \end{aligned}
    \end{equation}
  Note that
  $$\int_M\phi^2 (h+\delta)^{p-2}\abs{\nabla h}^2\geq\int_M \phi^2 (h+\delta)^{p-3}h \abs{\nabla h}^2,$$
and applying the Cauchy inequality  gives us
    $$-2\int_M \phi (h+\delta)^{p-2}h\innerproduct{\nabla\phi}{\nabla h} \leq \epsilon\int_M \phi^2 (h+\delta)^{p-2} \abs{\nabla h}^2 + \frac{1}{\epsilon} \int_M (h+\delta)^{p-2}h^2 \abs{\nabla\phi}^2,$$
    where $\epsilon$ is a positive constant. Next, by using H\"{o}lder inequality, the weighted Sobolev inequality and Young's inequality consecutively, the following evaluation holds
    $$
        \begin{aligned}
           & \int_M (-\lambda^{-})\phi^2 (h+\delta)^{p-2}h^2 \leq \Biggl(\int_M \bigl( -\lambda^{-}\bigl)^\frac{\nu}{2}\Biggl)^\frac{2}{\nu} \Biggl(\int_M \Bigl(\phi h(h+\delta)^{\frac{p-2}{2}}\Bigl)^{\frac{2\nu}{\nu-2}}\Biggl)^{\frac{\nu-2}{\nu}} \\
             \leq& C_S||\lambda^{-}||_{\nu/2} \int_M \Big| \nabla \Bigl(\phi h(h+\delta)^{\frac{p-2}{2}}\Bigl) \Big|^2
        \\=&C_S||\lambda^{-}||_{\nu/2}
            \int_M\abs{\nabla\phi h(h+\delta)^\frac{p-2}{2}+\phi\nabla h(h+\delta)^\frac{(p-2)}{2}+\frac{(p-2)}{2}\phi h(h+\delta)^\frac{p-4}{2}\nabla h}^2
            \\
            \leq& C_S||\lambda^{-}||_{\nu/2} \Biggr[ \Bigl(1+\frac{1}{\epsilon}\Bigl)\int_M \abs{\nabla\phi}^2(h+\delta)^{p-2}h^2+(1+\epsilon) (1+\eta)\int_M\phi^2(h+\delta)^{p-2}\abs{\nabla h}^2
            \\&+(1+\epsilon)(1+\frac{1}{\eta})\frac{(p-2)^2}{4}\int_M\phi^2(h+\delta)^{p-4}h^2\abs{\nabla h}^2\Biggr]
    \\\leq &C_S||\lambda^{-}||_{\nu/2}\Bigl(1+\frac{1}{\epsilon}\Bigl)\int_M \abs{\nabla\phi}^2(h+\delta)^{p-2}h^2+C_S||\lambda^{-}||_{\nu/2}\bigl(1+\epsilon\bigl)\times
\\&\bigl[(1+\eta)+(1+\frac{1}{\eta})\frac{(p-2)^2}{4}\bigl] \int_M\phi^2(h+\delta)^{p-2}\abs{\nabla h}^2.
\end{aligned}
$$
    Substituting the above computations into \eqref{4.3}, we obtain
    \begin{align*}
      &\Biggr\{ p-\epsilon -C_S (1+\epsilon)\bigl[(1+\eta)+(1+\frac{1}{\eta})\frac{(p-2)^2}{4}\bigl]||\lambda^{-}||_{\nu/2} \Biggr\}\int_M \phi^2 (h+\delta)^{p-2}\abs{\nabla h}^2
    \\ &\leq \Biggr[ C_S\Bigl(1+\frac{1}{\epsilon}\Bigl)||\lambda^{-}||_{\nu/2} +\frac{1}{\epsilon} \Biggr]\int_M \abs{\nabla\phi}^2(h+\delta)^{p-2}h^2
    \\&\leq\Biggr[ C_S\Bigl(1+\frac{1}{\epsilon}\Bigl)||\lambda^{-}||_{\nu/2} +\frac{1}{\epsilon} \Biggr]\int_M\abs{\nabla\phi}^2h^p.
    \end{align*}
   Now choose $\eta=\frac{2-p}{2}$
we obtain
\begin{align*}
 &\Biggr[ p-\epsilon -(1+\epsilon)C_S||\lambda^{-}||_{\nu/2} \frac{(4-p)^2}{4}\Biggr]\int_M \phi^2 (h+\delta)^{p-2}\abs{\nabla h}^2 \\&\leq\Biggr[ C_S\Bigl(1+\frac{1}{\epsilon}\Bigl)||\lambda^{-}||_{\nu/2} +\frac{1}{\epsilon} \Biggr]\int_M\abs{\nabla\phi}^2h^p
 \\&\leq\Biggr[ C_S\Bigl(1+\frac{1}{\epsilon}\Bigl)||\lambda^{-}||_{\nu/2} +\frac{1}{\epsilon} \Biggr]\frac{d^2}{r^2}\int_Mh^p.
\end{align*}
Due to the fact that $\norm{\lambda^-}_{\nu/2} < \frac{4p}{(4-p)^2C_S}$, we can choose $\epsilon$ be a small enough potitive constant such that the coefficient of the LHS is potitive. Then by letting $r\to+\infty$, it holds true that $h=0$ on $M$ thanks to the finiteness of the $L^p$- energy of $h$ and that $M$ has infinite volume. Therefore $u$ is a contant function on $M$.
\end{proof}
\begin{remark}
    Observe that if all of the eigenvalues of the Ricci curvature tensor of $M$ are nonnegative, then the term involved with $\lambda^{-}$ in equation \eqref{4.1} shall be vanished. Hence, the assumption on the upper bound of $\norm{\lambda^{-}}_{\nu/2}$ is automatically satisfied. Moreover, it is well-known that if Ricci curvature is nonnegatuuve then a Sobolev type inequality holds true on $M$. This implies that Theorem \ref{1.5} extends a classical result by Yau (see \cite{[Yau]}) on vanishing properties of $L^p(p>1)$-harmonic functions on Riemannian manifolds with nonnegative Ricci curvature.
\end{remark}

Now we switch our attention to a vanishing theorem on manifolds with lower bound conditions on the K\"{a}hler curvature operator. To begin with, we shall follow the notation in \cite{[PW22]} and \cite{[DungCho23]} to give an explicit definition of the algebraic curvature operator on Euclidean vector spaces.

\begin{definition}
    Let $(V,g)$ be an $n$-dimensional Euclidean vector space and let $V_{\C}=V\bigotimes_{\R}\C$. For any $\R$-multilinear, complex valued tensor $T$ on $V$ and $L\in \mathfrak{so}(V)$, set
    $$LT(X_1,\ldots,X_n)= -\sum_{i=1}^r T(X_1,\ldots,LX_i,\ldots,X_r).$$
    If $\mathfrak{g}\in\mathfrak{so}(V)$ is a Lie algebra, we define $T^{\mathfrak{g}}\in\bigl( \bigotimes^r V_{\C}^{*} \bigl)\bigotimes_{\R}\mathfrak{g}$ as
    $$g(L,T^{\mathfrak{g}}(X_1,\ldots,X_r))=LT(X_1,\ldots,X_r)$$
    for every $L\in \mathfrak{g}\in\mathfrak{so}(V)=\Lambda^2(V)$.

    A tensor $Rm\in\bigotimes^4V^{*}$ is called an algebraic curvature tensor if
    $$Rm(X,Y,Z,W)=-Rm(Y,X,Z,W)=-Rm(X,Y,W,Z)=Rm(Z,W,X,Y)$$
    and
    $$Rm(X,Y,Z,W)+Rm(Y,Z,X,W)+Rm(Z,X,Y,W)=0.$$
    In particular, the curvature operator $\mathfrak{R}:\Lambda^2V\to\Lambda^2V$ is induced by
    $$g(\mathfrak{R}(\wedgeproduct{X}{Y}),\wedgeproduct{Z}{W})=Rm(X,Y,Z,W).$$
\end{definition}

In the case of Riemannian manifold, the curvature operator $\mathfrak{g}$ vanishes on the complement of the holonomy algebra $\mathfrak{hol}$. To be specific, it induces $\mathfrak{R}_{\mathfrak{hol}}:\mathfrak{hol}\to \mathfrak{hol}$ and the associated curvature tensor $R\in Sym_B^2(\mathfrak{hol})$. When $\mathfrak{hol}=\mathfrak{u}(n)$, the manifold $(M,g)$ is K\"{a}hler and the operator $\mathfrak{R}_{\mathfrak{u}(n)}:\mathfrak{u}(n)\to \mathfrak{u}(n)$ is called the K\"{a}hler curvature operator and the corresponding $R\in Sym_B^2(\mathfrak{u}(n))$ is called the K\"{a}hler curvature tensor. It is worth to note that every K\"{a}hler curvature tensor on a K\"{a}hler manifold $(M,g,J)$, where $J$ is the complex structure, satisfies
$$Rm(X,Y,Z,W)=Rm(JX,JY,Z,W)=Rm(X,Y,JZ,JW).$$

Next, we carry on by recalling the following lemma, which originally refers to the remarkable paper written by Petersen and Wink \cite{[PW22]} (for a detailed discussion, see also  \cite{[DungCho23]}).

\begin{lemma}[\cite{[DungCho23]}] \label{4.2}
    Let $(M,g)$ be a complete non-compact K\"{a}hler manifold of complex dimension $n$, let $\kappa$ be a function on $M$ such that $\kappa\leq0$. Denote by $\mu_1\leq\cdots\leq\mu_{n^2}$ the eigenvalues of the K\"{a}hler curvature operator of $(M,g)$. Suppose that at every point $p\in M$,
    $$\frac{\mu_1(p)+\cdots+\mu_n(p)}{n+1} \geq \kappa(p).$$
    Then for every $(1,0)$-harmonic form $\omega$ on $M$, we have
    $$\frac{1}{2}\Delta\abs{\omega}^2 \geq \abs{\nabla \omega}^2 +\kappa(n+1)\abs{\omega}^2.$$
\end{lemma}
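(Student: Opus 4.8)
The plan is to deduce the inequality from the Bochner--Weitzenb\"ock formula together with a pointwise linear-algebra estimate controlling the curvature term by a partial sum of the eigenvalues of the K\"ahler curvature operator, in the spirit of Petersen and Wink. The analytic input is minimal and entirely pointwise, so no cut-off functions or integrations are needed; the whole difficulty sits in the algebra on $\mathfrak{u}(n)$.

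First I would record the Weitzenb\"ock formula for the harmonic $(1,0)$-form $\omega$. Since $\omega$ is harmonic, the Bochner identity reads
$$\frac{1}{2}\Delta\abs{\omega}^2 = \abs{\nabla\omega}^2 + \mathcal{W}(\omega,\omega),$$
where $\mathcal{W}(\omega,\omega)$ is the Weitzenb\"ock curvature term, which on $1$-forms is the (Hermitian) Ricci curvature evaluated on the vector dual to $\omega$. The statement is thus reduced to the pointwise estimate $\mathcal{W}(\omega,\omega)\geq(\mu_1+\cdots+\mu_n)\abs{\omega}^2$: indeed the hypothesis $\frac{\mu_1+\cdots+\mu_n}{n+1}\geq\kappa$ rearranges to $\mu_1+\cdots+\mu_n\geq(n+1)\kappa$, so this estimate at once gives $\mathcal{W}(\omega,\omega)\geq(n+1)\kappa\abs{\omega}^2$, which is exactly the asserted lower bound.

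To establish the algebraic estimate I would argue pointwise in a unitary frame. Normalising $\abs{\omega}=1$ and letting $Z$ denote the dual $(1,0)$-vector, the curvature term is the Ricci contraction $\sum_{k=1}^{n} Rm(Z,\bar Z,E_k,\bar E_k)$. The key step is to use the K\"ahler symmetries $Rm(X,Y,Z,W)=Rm(JX,JY,Z,W)=Rm(X,Y,JZ,JW)$ and the first Bianchi identity to rewrite each summand $Rm(Z,\bar Z,E_k,\bar E_k)$ as a genuine \emph{diagonal} entry $\innerproduct{\mathfrak{R}(\eta_k)}{\eta_k}$ of the K\"ahler curvature operator, where the $\eta_k$ are the real $(1,1)$-forms obtained by pairing the fixed direction against the coframe (schematically $\eta_k\sim dz^1\wedge d\bar z^k$). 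These $n$ elements are orthonormal and lie in $\mathfrak{u}(n)$, so the curvature term equals the trace of $\mathfrak{R}$ restricted to the $n$-dimensional subspace $W=\mathrm{span}\{\eta_1,\dots,\eta_n\}\subset\mathfrak{u}(n)$. I would then invoke the variational (min--max, or Ky Fan) characterisation of partial eigenvalue sums: for a self-adjoint operator the trace over any $k$-dimensional subspace is at least the sum of its $k$ smallest eigenvalues. Applied to $\mathfrak{R}$ and $W$ with $k=n$, this yields $\mathcal{W}(\omega,\omega)=\mathrm{tr}(\mathfrak{R}|_{W})\geq\mu_1+\cdots+\mu_n$, completing the reduction.

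The main obstacle is exactly this middle identification: showing that the Ricci contraction $\sum_k Rm(Z,\bar Z,E_k,\bar E_k)$ really is a partial trace over an $n$-dimensional subspace of $\mathfrak{u}(n)$. This is not automatic, since in a naive real frame the same quantity surfaces as an \emph{off-diagonal} pairing against the K\"ahler form; it is only after exploiting the additional K\"ahler curvature symmetries --- which convert $R_{i\bar i k\bar k}$ into the diagonal entry $R_{i\bar k k\bar i}$ --- that the partial-trace reading becomes legitimate and the clean min--max bound applies. Once this identification is secured, the Weitzenb\"ock formula and the substitution of the hypothesis are routine, and the proof is purely pointwise.
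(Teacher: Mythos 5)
First, a point of reference: the paper itself gives \emph{no proof} of this lemma --- it is quoted from \cite{[DungCho23]} and attributed to Petersen and Wink \cite{[PW22]} --- so your argument can only be compared with the method of those sources. That method writes the Weitzenb\"ock curvature term as $\sum_\alpha \mu_\alpha\abs{\Xi_\alpha\omega}^2$, where $\{\Xi_\alpha\}$ is an orthonormal eigenbasis of the K\"ahler curvature operator acting on $\omega$ by derivations, and then combines a Casimir-type identity $\sum_\alpha\abs{\Xi_\alpha\omega}^2=c\,\abs{\omega}^2$ (a dimensional constant) with the concentration bound $\abs{\Xi_\alpha\omega}^2\le\abs{\omega}^2$ and a rearrangement argument in which the worst case loads all the mass onto the smallest eigenvalues. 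Your proof is correct and takes a genuinely more elementary route: since the Weitzenb\"ock term on $1$-forms is just the Ricci curvature, and since the K\"ahler symmetries give $Rm(Z,\bar Z,E_k,\bar E_k)=Rm(Z,\bar E_k,E_k,\bar Z)=\innerproduct{\mathfrak{R}(\eta_k)}{\eta_k}$ with $\eta_k\sim Z\wedge\bar E_k$ orthonormal, the Ricci term becomes a partial trace and Ky Fan's minimum principle applies directly. This is in fact the dual formulation of the very same optimization that Petersen--Wink perform, but it needs no representation-theoretic set-up, and it yields the sharper pointwise bound $\mathcal{W}(\omega,\omega)\ge(\mu_1+\cdots+\mu_n)\abs{\omega}^2$, of which the stated lemma (with its $(n{+}1)$-normalization and the assumption $\kappa\le0$) is a weakening. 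What the Petersen--Wink machinery buys in exchange is scope: for $(p,q)$-forms with $p+q\ge 2$ the Weitzenb\"ock term is no longer the Ricci curvature and admits no partial-trace reading, so your argument is really special to $1$-forms --- which is all that is needed here.

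Two details in your write-up need patching, though neither is a genuine gap. The elements $\eta_k$ with $k\ne 1$ are not real: they lie in the complexification $\mathfrak{u}(n)\otimes_{\R}\C\cong\Lambda^{1,1}$, not in $\mathfrak{u}(n)$ itself, so the sentence ``these $n$ elements are orthonormal and lie in $\mathfrak{u}(n)$'' is not literally true. You must either apply Ky Fan to the Hermitian extension of $\mathfrak{R}_{\mathfrak{u}(n)}$ --- legitimate because a real symmetric operator and its complexification have the same spectrum, and because on a K\"ahler manifold $\mathfrak{R}$ preserves $\Lambda^{1,1}$ and vanishes on its orthogonal complement, so the compression of $\mathfrak{R}$ to your subspace only sees the eigenvalues $\mu_1,\ldots,\mu_{n^2}$ and not the extra zeros of the full curvature operator --- or split each $\eta_k$ into real and imaginary parts, which exhibits $Ric(Z,\bar Z)$ as a diagonal sum over real orthonormal elements of $\mathfrak{u}(n)$ with weights in $[0,1]$ of total mass $n$, and the same bound $\mu_1+\cdots+\mu_n$ follows by rearrangement. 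Second, the identification of $Rm(Z,\bar E_k,E_k,\bar Z)$ with a \emph{diagonal} entry of $\mathfrak{R}$ is sign-convention sensitive; it should be checked with the same conventions under which the round sphere has positive curvature operator (the paper's Definition 4.1 leaves this implicit), in which case it is exactly the classical fact that the Ricci curvature of a K\"ahler metric is a trace of the curvature operator over $\Lambda^{1,1}$, as you intend.
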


The following proof of Theorem \ref{1.7} with the generalized curvature condition is a straight application of Lemma \ref{4.2}.

\begin{proof}[Proof of Theorem \ref{1.7}]
Let $\omega=du$, since $u$ is pluriharmonic, so $\omega$ is harmonic. Note that every harmonic $1$-form $\omega$ can be decomposed into
$$\omega = \omega_1 + \omega_2$$
where $\omega_1$ is a $(1,0)$-harmonic form and $\omega_2$ is a $(0,1)$-harmonic form, as a consequence of Lemma \ref{4.2}, $\omega$ satisfies the following variation of the Bochner formula
    $$\frac{1}{2}\Delta\abs{\omega}^2 \geq \abs{\nabla \omega}^2 +\kappa(n+1)\abs{\omega}^2.$$
     Since $u$ is pluriharmonic, $\omega$ satisfies the following Kato-type inequality
    $$\abs{\nabla \omega}^2 \geq 2\abs{\nabla\abs{\omega}}^2,$$
    where we used $|\omega|=|du|=|\nabla u|$ and $|\nabla \omega|^2=|\nabla^2u|$.
    Hence
    \begin{equation}
        \abs{\omega}\Delta\abs{\omega} \geq \abs{\nabla\abs{\omega}}^2 +\kappa(n+1)\abs{\omega}^2.\label{4.4}
    \end{equation}
    Let $\phi$ be the same cut-off function defined in the proof of Theorem \ref{1.5}. In the following for simplicity we let $h:=\abs{\omega}$.

    \textbf{Case $p\geq 2$.} Multiplying both side of \eqref{4.4} with $\phi^2h^{p-2}$ gives us
    $$\int_M \phi^2 h^{p-1} \Delta h \geq \int_M \phi^2 h^{p-2}\abs{\nabla h}^2 + (n+1)\int_M \kappa \phi^2h^p.$$
    By utilizing the integration by parts formula and reshuffling the terms, we have that
    \begin{equation}
        2\int_M \phi h^{p-1}\innerproduct{\nabla\phi}{\nabla h} + p\int_M \phi^2 h^{p-2} \abs{\nabla h}^2 \leq -(n+1)\int_M \kappa\phi^2 h^p.\label{4.5}
    \end{equation}
    Applying the Cauchy inequality to the first term of the LHS of \eqref{4.5}, we obtain
    $$-2\int_M \phi h^{p-1}\innerproduct{\nabla\phi}{\nabla h} \leq \epsilon\int_M \phi^2 h^{p-2} \abs{\nabla h}^2 + \frac{1}{\epsilon} \int_M h^p \abs{\nabla\phi}^2,$$
    where $\epsilon$ is a positive constant.
    Next, by using the H\"{o}lder inequality, the weighted Sobolev inequality and Young's inequality consecutively on the RHS terms of \eqref{4.5}, the following evaluation holds
    $$-\int_M \kappa \phi^2 h^p \leq C_S\norm{\kappa}_{\nu/2} \Biggr[ \frac{p^2(1+\epsilon)}{4}\int_M \phi^2 h^{p-2}\abs{\nabla h}^2 + \Bigl(1+\frac{1}{\epsilon}\Bigl)\int_M h^p \abs{\nabla\phi}^2 \Biggr].$$
    Substituting the above computations into \eqref{4.5}, we obtain
    $$\begin{aligned}\Biggr[p-\epsilon -\frac{C_S(n+1)p^2(1+\epsilon)}{4}\norm{\kappa}_{\nu/2} \Biggr] \int_M \phi^2 h^{p-2} \abs{\nabla h}^2 
    \\\leq \Biggr[ C_S(n+1)\Bigl(1+\frac{1}{\epsilon}\Bigl)\norm{\kappa}_{\nu/2} +\frac{1}{\epsilon} \Biggr]\int_M h^p \abs{\nabla\phi}^2.
    \end{aligned}$$
    Since $\norm{\kappa}_{\nu/2} < \frac{4}{pC_S(n+1)}$, it it true that for sufficiently small  positive constant $\epsilon$ we have
    $$D(n,\epsilon) = \frac{p -\epsilon -\frac{C_S(n+1)p^2(1+\epsilon)}{4}\norm{\kappa}_{\nu/2}}{C_S(n+1)\Bigl(1+\frac{1}{\epsilon}\Bigl)\norm{\kappa}_{\nu/2} +\frac{1}{\epsilon}} >0.$$
    Hence ,
    $$D(n,\epsilon)\int_{B_o(r)} \phi^2 \abs{\omega}^{p-2} \abs{\nabla \abs{\omega}}^2 \leq \frac{d^2}{r^2}\int_{B_o(r)} \abs{\omega}^p.$$
    By letting $r\to+\infty$, the result follows from the fact that $\abs{\omega}\in L^p$ and that $M$ has infinite volume.

    \textbf{Case $1<p<2.$} Let
 $\delta,\eta$ be positive constants in the following.  Multiplying both side of \eqref{4.4} with $\phi^2(h+\delta)^{p-2}$ gives us
    $$\int_M \phi^2 (h+\delta)^{p-2} h\Delta h \geq \int_M \phi^2 (h+\delta)^{p-2}\abs{\nabla h}^2 + (n+1)\int_M \kappa \phi^2(h+\delta)^{p-2}h^2.$$
   By utilizing the integration by parts formula and reshuffling the terms, we have that
   \begin{equation}
    \begin{aligned}
         &2\int_M \phi (h+\delta)^{p-2} h\innerproduct{\nabla\phi}{\nabla h} + (p-2)\int_M \phi^2 (h+\delta)^{p-3}h \abs{\nabla h}^2\\
         &+2\int_M\phi^2 (h+\delta)^{p-2}\abs{\nabla h}^2
         \leq-(n+1)\int_M \kappa\phi^2(h+\delta)^{p-2}h^2.\label{4.6}
    \end{aligned}
    \end{equation}
    Note that
  $$\int_M\phi^2 (h+\delta)^{p-2}\abs{\nabla h}^2\geq\int_M \phi^2 (h+\delta)^{p-3}h \abs{\nabla h}^2,$$
and applying the Cauchy inequality  gives us
    $$-2\int_M \phi (h+\delta)^{p-2}h\innerproduct{\nabla\phi}{\nabla h} \leq \epsilon\int_M \phi^2 (h+\delta)^{p-2} \abs{\nabla h}^2 + \frac{1}{\epsilon} \int_M (h+\delta)^{p-2}h^2 \abs{\nabla\phi}^2,$$
    where $\epsilon$ is a positive constant. Next, by using H\"{o}lder inequality, the weighted Sobolev inequality and Young's inequality consecutively, the following evaluation holds
    $$
        \begin{aligned}
            &\int_M -k\phi^2 (h+\delta)^{p-2}h^2 \leq \Biggl(\int_M (-k)^\frac{\nu}{2}\Biggl)^\frac{2}{\nu} \Biggl(\int_M \Bigl(\phi h(h+\delta)^{\frac{p-2}{2}}\Bigl)^{\frac{2\nu}{\nu-2}}\Biggl)^{\frac{\nu-2}{\nu}} \\
             \leq& C_S||k||_{\nu/2} \int_M \Big| \nabla \Bigl(\phi h(h+\delta)^{\frac{p-2}{2}}\Bigl) \Big|^2
        \\=&C_S||k||_{\nu/2}
            \int_M\abs{\nabla\phi h(h+\delta)^\frac{p-2}{2}+\phi\nabla h(h+\delta)^\frac{(p-2)}{2}+\frac{(p-2)}{2}\phi h(h+\delta)^\frac{p-4}{2}\nabla h}^2
            \\
             \leq& C_S||k||_{\nu/2} \Biggr[ \Bigl(1+\frac{1}{\epsilon}\Bigl)\int_M \abs{\nabla\phi}^2(h+\delta)^{p-2}h^2+(1+\epsilon) (1+\eta)\int_M\phi^2(h+\delta)^{p-2}\abs{\nabla h}^2
            \\&+(1+\epsilon)(1+\frac{1}{\eta})\frac{(p-2)^2}{4}\int_M\phi^2(h+\delta)^{p-4}h^2\abs{\nabla h}^2\Biggr]
    \\\leq &C_S||k||_{\nu/2}\Bigl(1+\frac{1}{\epsilon}\Bigl)\int_M \abs{\nabla\phi}^2(h+\delta)^{p-2}h^2+C_S||k||_{\nu/2}\bigl(1+\epsilon\bigl)\times
\\&\left[(1+\eta)+(1+\frac{1}{\eta})\frac{(p-2)^2}{4}\right] \int_M\phi^2(h+\delta)^{p-2}\abs{\nabla h}^2.
\end{aligned}
$$
 Substituting the above computations into \eqref{4.6}, we obtain
    \begin{align*}
      &\Biggr\{ p-\epsilon -(n+1)C_S (1+\epsilon)\bigl[(1+\eta)+(1+\frac{1}{\eta})\frac{(p-2)^2}{4}\bigl]||k||_{\nu/2} \Biggr\}\int_M \phi^2 (h+\delta)^{p-2}\abs{\nabla h}^2
    \\ &\leq \Biggr[ (n+1)C_S\Bigl(1+\frac{1}{\epsilon}\Bigl)||k||_{\nu/2} +\frac{1}{\epsilon} \Biggr]\int_M \abs{\nabla\phi}^2(h+\delta)^{p-2}h^2
    \\&\leq\Biggr[ (n+1)C_S\Bigl(1+\frac{1}{\epsilon}\Bigl)||k||_{\nu/2} +\frac{1}{\epsilon} \Biggr]\int_M\abs{\nabla\phi}^2h^p.
    \end{align*}
   Now choose $\eta=\frac{2-p}{2}$
we obtain \begin{align*}
 &\Biggr[ p-\epsilon -(n+1)C_S(1+\epsilon)||k||_{\nu/2} \frac{(4-p)^2}{4}\Biggr]\int_M \phi^2 (h+\delta)^{p-2}\abs{\nabla h}^2 \\&\leq\Biggr[ (n+1)C_S\Bigl(1+\frac{1}{\epsilon}\Bigl)||k||_{\nu/2} +\frac{1}{\epsilon} \Biggr]\int_M\abs{\nabla\phi}^2h^p
 \\&\leq\Biggr[ (n+1)C_S\Bigl(1+\frac{1}{\epsilon}\Bigl)||k||_{\nu/2} +\frac{1}{\epsilon} \Biggr]\frac{d^2}{r^2}\int_Mh^p.
\end{align*}
Due to the fact that $\norm{k}_{\nu/2} < \frac{4p}{(4-p)^2C_S(n+1)}$, we can choose $\epsilon$ be a sufficiently small potitive constant such that the coefficient of the LHS is potitive.  Then by letting $r\to+\infty$, the result follows from the fact that $\abs{\omega}\in L^p$ and that $M$ has infinite volume.
\end{proof}
\begin{remark}
    We note that in \cite{[DungCho23]}, the second author and Cho obtained a vanishing result for general $(p, q)$ forms $\omega$ with finite $L^Q \ (Q\geq    2)$ energy. However, due to the defection of a refined Kato inequality for usual harmonic forms, they required the forms to be harmonic fields. Note that this assumption is stronger than the usual harmonicity. In our theorem, we used the usual harmonicity of the function $u$ and do not need to assume that $\omega=du$ is a harmonic field.
\end{remark}

\section*{Acknowledgement}
A part of this work was completed during a stay of the second author at Vietnam Institute for Advanced Study in Mathemmatics (VIASM) in summer 2023. He would like to thank the staff there for hospitality and support. The second author was supported by NAFOSTED under grant number 101.02-2021.28. The third author was supported by the NSF of China (No.12271069).


\begin{thebibliography}{99}
    \bibitem{[Au82]} T. Aubin, \textit{Nonlinear Analysis on Manifolds. Monge-Amp\`{e}re Equations}, Springer New York, 1982.
\bibitem{[CCW]} S. C. Chang, J. T. Chen $\&$ S. W. Wei, \textit{Liouville properties for $p$-harmonic maps with finite $q$-energy}, Trans. Amer. Math. Soc. \textbf{368} (2016), no. 2, 787--825.
     \bibitem{[DungCho23]} G. Cho $\&$ N. T. Dung, \textit{Vanishing results from Lichnerowicz Laplacian on complete K\"{a}hler manifolds and applications}, J. Math. Anal. Appl. \textbf{517} (2023), 126602.

\bibitem{[ChZhou]} X. Chen  $\&$ D. T. Zhou, \textit{ Manifolds with weighted Poincar\'e inequality and uniqueness of minimal hypersurfaces}, Comm. Anal. Geom. \textbf{17} (2009), no. 1, 139--154.
\bibitem{[DLW]} Y. X. Dong, H. Z. Lin $\&$ S. W. Wei, \textit{$L^2$ curvature pinching theorems and vanishing theorems on complete Riemannian manifolds}, Tohoku Math. J. \textbf{71} (2019), no. 4, 581--607.
    \bibitem{[DS19]} N. T. Dung and C. J. A. Sung, \textit{Analysis on weighted $p$-harmonic forms and applications}, Int. J. Math. \textbf{30} (2019), no.11, 1950058.
    \bibitem{[Fu]} H. P. Fu, \textit{Rigidity theorems on smooth metric measure spaces with weighted Poincar\'e inequality}, Nonlinear Anal. \textbf{98} (2014), 1--12.
    \bibitem{[FuY]} H. P. Fu $\&$ D. Y. Yang, \textit{Vanishing theorems on complete manifolds with weighted Poincar\'e inequality
and applications}, Nagoya Math. J. \textbf{206} (2012), 25--37.
\bibitem{[HL]} Y. B. Han $\&$ H. Z. Lin, \textit{Vanishing theorems for $f$-harmonic forms on smooth metric measure spaces}, Nonlinear Anal. \textbf{162} (2017), 113--127.
    \bibitem{[KLZ08]} S. Kong, P. Li $\&$ D. Zhou, \textit{Spectrum of the Laplacian on Quaternionic K\"{a}hler manifolds}, J. Diff. Geom. \textbf{78} (2008), no. 2, 295--332.
\bibitem{[Lam10]} K. H. Lam, \textit{Results on a weighted Poincar\'{e} inequality on complete manifolds}, Trans. Amer. Math. Soc. \textbf{362} (2010), no. 10, 5043--5062.
\bibitem{[Li]} P. Li, \textit{On the structure of complete K\"ahler manifolds with nonnegative curvature near infinity}, Invent. Math. \textbf{99} (1990), no. 3, 579--600.
    \bibitem{[LW06]} P. Li $\&$ J. P. Wang, \textit{Weighted Poincar\'{e} inequality and rigidity of complete manifolds}, Ann. Sci. Ec. Norm. Super. \textbf{39} (2006), no. 6, 921--982.
\bibitem{[Mun]} O. Munteanu, \textit{Two results on the weighted Poincar\'e inequality on complete K\"ahler manifolds}, Math. Res. Lett. \textbf{14} (2007), no. 6, 995--1008.
\bibitem{PW} P. Petersen $\&$ M. Wink, \textit{New Curvature Conditions for the Bochner Technique}, Invent. Math. \textbf{224} (2021), 33--54.
\bibitem{[PW22]} P. Petersen $\&$ M. Wink, \textit{Estimation and Vanishing Results for Hodge numbers}, Journal f\"{u}r die reine und angewandte Mathematik (Crelles Journal) \textbf{780} (2021), 197--219 (See also  arXiv:2009.14319).
    \bibitem{[Vie]} M. Vieira, \textit{Vanishing theorems for $L^2$ harmonic forms on complete Riemannian manifolds}, Geom. Dedicata \textbf{184} (2016), 175--191.
\bibitem{[Wang]} L. H. Wang, \textit{Rigidity of complete manifolds with weighted Poincar\'e inequality}, J. Geom. Anal. \textbf{32} (2022), no. 11, Paper No. 280, 21 pp.
\bibitem{[Xia]} C. Y. Xia, \textit{A finiteness theorem for ends of complete manifolds supporting a Sobolev type inequality}, Ann. Global Anal. Geom. \textbf{40} (2011), no. 1,  95--104.
   \bibitem{[Yau]} S. T. Yau, \textit{Some function-theoretic properties of complete Riemannian manifold and their applications to geometry}, Indiana Univ. Math. J. {\bf25} (1976), no. 7, 659--670
\bibitem{zhang} X. Zhang, \textit{A note on $p$-harmonic $1$-forms on complete manifolds}, Canad. Math. Bull. \textbf{44} (2001), no. 3, 376--384.
\end{thebibliography}
\end{document}